\newtheorem{theorem}{Theorem}[section]
\newtheorem{lemma}[theorem]{Lemma}
\newtheorem{proposition}[theorem]{Proposition}
\theoremstyle{definition}
\newtheorem{definition}[theorem]{Definition}
\newtheorem{remark}[theorem]{Remark}
\newtheorem{example}[theorem]{Example}
\theoremstyle{remark}
\numberwithin{equation}{section}
\begin{document}

\title[On the Differential Smoothness of Skew PBW Extensions] {A note on the differential smoothness \\ of skew PBW extensions}


\author{Andr\'es Rubiano}
\address{Universidad Nacional de Colombia - Sede Bogot\'a}
\curraddr{Campus Universitario}
\email{arubianos@unal.edu.co}
\address{Universidad ECCI}
\curraddr{Campus Universitario}
\email{arubianos@ecci.edu.co}
\thanks{}


\author{Armando Reyes}
\address{Universidad Nacional de Colombia - Sede Bogot\'a}
\curraddr{Campus Universitario}
\email{mareyesv@unal.edu.co}

\thanks{This work was supported by Faculty of Science, Universidad Nacional de Colombia - Sede Bogot\'a, Colombia [grant number 53880].}

\subjclass[2020]{16E45, 16S30, 16S32, 16S36, 16S38, 16S99, 16W20, 16T05, 58B34}

\keywords{Differentially smooth algebra, integrable calculus, Ore extension, skew PBW extension}

\date{}

\dedicatory{Dedicated to Oswaldo Lezama on the Occasion of His 68th Birthday}

\begin{abstract} 

We investigate the differential smoothness of a certain family of skew Poincar\'e-Birkhoff-Witt extensions.

\end{abstract}

\maketitle


\section{Introduction}

Ore \cite{Ore1931, Ore1933} introduced a kind of noncommutative polynomial rings which has become one of most basic and useful constructions in ring theory and noncommutative algebra. For an associative and unital ring $R$, an endomorphism $\sigma$ of $R$ and a $\sigma$-derivation $\delta$ of $R$, the {\em Ore extension} or {\em skew polynomial ring} of $R$ is obtained by adding a single generator $x$ to $R$ subject to the relation $xr = \sigma(r) x + \delta(r)$ for all $r\in R$. This Ore extension of $R$ is denoted by $R[x; \sigma, \delta]$. As one can appreciate in the literature, a lot of papers and books have been published concerning ring-theoretical, homological, geometrical properties and applications of these extensions (e.g. \cite{BrownGoodearl2002, BuesoTorrecillasVerschoren2003, Fajardoetal2024, GoodearlLetzter1994, GoodearlWarfield2004, McConnellRobson2001, Li2002, SeilerBook2010} and references therein).

One of the topics of research on Ore extensions has been carried out by several authors concerning to the ring-theoretical notions of {\em Baer}, {\em quasi-Baer}, {\em p.p.}, {\em p.q.}, {\em Armendariz} and {\em reduced} rings (e.g. \cite{Birkenmeieretal2001b, Hashemietal2003, HashemiMoussavi2005, HongKimKwak2000, HongKimKwak2003, Huhetal2002, Kaplansky1968, Krempa1996, LeeWong2003, Matczuk2004} and references therein; for a detailed description of each one of these notions, see the excellent treatment developed by Birkenmeier et al. \cite{BirkenmeierParkRizvi2013}). All these properties are formulated to the ring of coefficients $R$ of the Ore extension $R[x; \sigma, \delta]$ under certain adequate conditions on the maps $\sigma$ and $\delta$. The {\em extension} of these theoretical notions from $R$ to $R[x; \sigma, \delta]$ was considered by Nasr-Isfahani and Moussavi \cite{NasrMoussavi2008}. In their paper, for a ring $R$ with an automorphism $\sigma$ and $\sigma$-derivation $\delta$ where $\alpha \delta = \delta \alpha$, they considered the $\overline{\sigma}$-derivation $\overline{\delta}$ on the Ore extension $R[x; \sigma, \delta]$ which, precisely, {\em extends} $\delta$. More exactly, for an element $f(x) = r_0 + r_1 x + \dotsb + r_nx^n \in R[x; \sigma, \delta]$ they defined the automorphism $\overline{\sigma}$ and the $\overline{\sigma}$-derivation $\overline{\delta}$ on $R[x; \sigma, \delta]$ as
\begin{align*}
    \overline{\sigma}(f(x)) = &\ \sigma(r_0) + \sigma(r_1) x + \dotsb + \sigma(r_n)x^n, \quad {\rm and} \\
     \overline{\delta}(f(x)) = &\ \delta(r_0) + \delta(r_1) x + \dotsb + \delta(r_n)x^n, 
\end{align*}

respectively. Such an automorphism $\overline{\sigma}$ and derivation $\overline{\delta}$ of $R[x; \sigma, \delta]$ are called {\em extended automorphism} and {\em derivation of} $\delta$.

Related to the study of transfer of ring-theoretical properties from coefficient rings to noncommutative polynomial extensions over these rings, 
Kwak et al. \cite{Kwaketal2014} extended the reflexive property to the skewed reflexive property by ring endomorphisms. An endomorphism $\sigma$ of a ring $R$ is called \textit{right} (resp., \textit{left}) \textit{skew reflexive} if for
$a, b \in R$, $aRb = 0$ implies $bR\sigma(a) = 0$ (resp., $\sigma(b)Ra = 0$), and $R$ is called \textit{right} (resp., \textit{left}) \textit{$\sigma$-skew reflexive} if there exists a right (resp., left) skew reflexive endomorphism $\sigma$ of $R$. $R$ is said to be \textit{$\sigma$-skew reflexive} if it is both right and left $\sigma$-skew reflexive. It is clear that $\sigma$-rigid rings are right $\sigma$-skew reflexive. More precisely, a ring $R$ is reduced and right $\sigma$-skew reflexive for a monomorphism $\sigma$ of $R$ if and only if $R$ is $\sigma$-rigid \cite[Theorem 2.6]{Kwaketal2014}. Bhattacharjee \cite{Bhattacharjee2020} extend the notion of RNP rings (Kheradmand et al. \cite{Kheradmandetal2017}, where \textit{RNP} means {\em reflexive-nilpotents-property}) to ring endomorphisms $\sigma$ and introduced the notion of $\sigma$-skew RNP rings as a generalization of $\sigma$-skew reflexive rings. An endomorphism $\sigma$ of a ring $R$ is called \textit{right} (resp., \textit{left}) \textit{skew} RNP if for $a, b \in N(R)$ (the set of nilpotent elements of $R$), $aRb = 0$ implies $bR\sigma(a) = 0$ (resp., $\sigma(b)Ra = 0$). A ring $R$ is called \textit{right} (resp., \textit{left}) \textit{$\sigma$-skew} RNP if there exists a right (resp., left) skew RNP endomorphism $\sigma$ of $R$. $R$ is said to be \textit{$\sigma$-skew} RNP if it is both right and left $\sigma$-skew RNP. From \cite[Remark 1.2]{Bhattacharjee2020}, we know that reduced rings are $\sigma$-skew RNP for any endomorphism $\sigma$, and every right (resp., left) $\sigma$-skew reflexive ring is right (resp., left) $\sigma$-skew RNP. By \cite[Example 1.3]{Bhattacharjee2020}, we have that the notion of $\sigma$-skew RNP ring is not left-right symmetric. However, if $R$ is an RNP ring with an endomorphism $\sigma$, then $R$ is right $\sigma$-skew RNP if and only if $R$ is left $\sigma$-skew RNP. 

With the aim of generalizing Ore extensions of injective type (that is, $R[x; \sigma, \delta]$ with $\sigma$ an injective map) and other families of noncommutative rings appearing in the literature, Gallego and Lezama \cite{GallegoLezama2011} defined the {\em skew Poincar\'e-Birkhoff-Witt extensions} ({\em SPBW extensions} for short). Since then, ring, homological and geometrical properties of these objects have been studied by some authors (e.g. \cite{AbdiTalebi2023, Artamonov2015, Hamidizadehetal2020, HigueraReyes2023, LezamaReyes2014, NinoRamirezReyes2020, ReyesSuarez2020, ReyesSuarez2021Radicals}; a detailed treatment can be found in Fajardo et al \cite{Fajardoetal2020}). In particular, and due to the relation between Ore extensions and SPBW extensions in terms of endomorphisms and derivations (Proposition \ref{GallegoLezama2011Proposition3}), Reyes and Su\'arez \cite{ReyesSuarez2017} carried out a similar work to the presented by by Nasr-Isfahani and Moussavi \cite{NasrMoussavi2008} but now in the more general setting of SPBW extensions (see Section \ref{SPBWdefinitionspreliminaries} for all details), while Su\'arez et al. \cite{SuarezHigueraReyes2024} studied the reflexive-nilpotents-property for skew PBW extensions. There, they introduced the $\Sigma$-{\em skew CN} and $\Sigma$-{\em skew reflexive} ({\em RNP}) {\em rings} and under conditions of compatibility, they investigated the transfer of the reflexive-nilpotents-property from a ring of coefficients to a skew PBW extension over it. Their results extend those corresponding presented by Bhattacharjee \cite{Bhattacharjee2020} for Ore extensions.

On the other hand, the geometric notion of interest in this paper is {\em differential smoothness} in the Brzezi\'nski and Sitarz's sense \cite{BrzezinskiSitarz2017}. Before to define it, let us say some words on the origin of {\em smoothness} of algebras.

Just as Brzezi\'nski and Lomp said \cite[Section 1]{BrzezinskiLomp2018}, the study of this smoothness goes back at least to Grothendieck's EGA \cite{Grothendieck1964}. The concept of a {\em formally smooth commutative} ({\em topological}) {\em algebra} introduced by him was extended to the noncommutative setting by Schelter \cite{Schelter1986}. An algebra is {\em formally smooth} if and only if the kernel of the multiplication map is projective as a bimodule. This notion arose as a replacement of a far too general definition based on the finiteness of the global dimension; Cuntz and Quillen \cite{CuntzQuillen1995} called these algebras {\em quasi-free}. Precisely, the notion of smoothness based on the finiteness of this dimension was refined by Stafford and Zhang \cite{StaffordZhang1994}, where a Noetherian algebra is said to be {\em smooth} provided that it has a finite global dimension equal to the homological dimension of all its simple modules. In the homological setting, Van den Bergh \cite{VandenBergh1998} called an algebra {\em homologically smooth} if it admits a finite resolution by finitely generated projective bimodules. The characterization of this kind of smoothness for the noncommutative pillow, the quantum teardrops, and quantum homogeneous spaces was made by Brzezi{\'n}ski \cite{Brzezinski2008, Brzezinski2014} and Kr\"ahmer \cite{Krahmer2012}, respectively.

Brzezi{\'n}ski and Sitarz \cite{BrzezinskiSitarz2017} defined other notion of smoothness of algebras, termed {\em differential smoothness} due to the use of differential graded algebras of a specified dimension that admits a noncommutative version of the Hodge star isomorphism, which considers the existence of a top form in a differential calculus over an algebra together with a string version of the Poincar\'e duality realized as an isomorphism between complexes of differential and integral forms. This new notion of smoothness is different and more constructive than the homological smoothness mentioned above. \textquotedblleft The idea behind the {\em differential smoothness} of algebras is rooted in the observation that a classical smooth orientable manifold, in addition to de Rham complex of differential forms, admits also the complex of {\em integral forms} isomorphic to the de Rham complex \cite[Section 4.5]{Manin1997}. The de Rham differential can be understood as a special left connection, while the boundary operator in the complex of integral forms is an example of a {\em right connection}\textquotedblright\ \cite[p. 413]{BrzezinskiSitarz2017}.

Several authors (e.g. \cite{Brzezinski2015, Brzezinski2016, BrzezinskiElKaoutitLomp2010, BrzezinskiLomp2018, BrzezinskiSitarz2017, DuboisVioletteKernerMadore1990, Karacuha2015, KaracuhaLomp2014, ReyesSarmiento2022}) have characterized the differential smoothness of algebras such as the quantum two - and three - spheres, disc, plane, the noncommutative torus, the coordinate algebras of the quantum group $SU_q(2)$, the noncommutative pillow algebra, the quantum cone algebras, the quantum polynomial algebras, Hopf algebra domains of Gelfand-Kirillov dimension two that are not PI, families of Ore extensions, some 3-dimensional skew polynomial algebras, diffusion algebras in three generators, and noncommutative coordinate algebras of deformations of several examples of classical orbifolds such as the pillow orbifold, singular cones and lens spaces. An interesting fact is that some of these algebras are also homologically smooth in the Van den Bergh's sense.

With all above facts in mind, our purpose in this paper is to investigate the differential smoothness of skew PBW extensions considering extended automorphisms and derivations, from the ring of coefficients, to the skew PBW over this ring. In this way, we contribute to the the study of algebraic and geometric properties of the Ore extensions considered by Nasr-Isfahani and Moussavi \cite{NasrMoussavi2008} and Bhattacharjee \cite{Bhattacharjee2020}, and hence continue the research on the differential smoothness of noncommutative algebras related to SPBW extensions that have been carried out by the authors in \cite{ReyesSuarez2016, RubianoReyes2024DSBiquadraticAlgebras, RubianoReyes2024DSDoubleOreExtensions, RubianoReyes2024DSSPBWKt}.

The article is organized as follows. In Section \ref{DefinitionspreliminariesSPBWDS4} we review the key facts on SPBW extensions (Section \ref{SPBWdefinitionspreliminaries}) and differential smoothnes of algebras (Section \ref{DefinitionsandpreliminariesDSA}) in order to set up notation and render this paper self-contained. Section \ref{DSSPBW4originalresults} contains the original results of the paper. We start in Section \ref{ASPBWpreliminaries} with preliminary facts on the extension of automorphisms and derivations of a ring $R$ to a SPBW extension over $R$. Next, in Section \ref{DifferentialsmoothnesSPBWEAD} we characterize the differential smoothness of this kind of SPBW extensions (Theorem \ref{smoothSPBWn}).

\section{Definitions and preliminaries}\label{DefinitionspreliminariesSPBWDS4}

\subsection{Skew Poincar\'e-Birkhoff-Witt extensions}\label{SPBWdefinitionspreliminaries}

As it has been shown in the literature, SPBW extensions generalize several kinds of rings such as Ore extensions of injective type \cite{Ore1931, Ore1933}, PBW extensions \cite{BellGoodearl1988}, 3-dimensional skew polynomial algebras \cite{BellSmith1990}, diffusion algebras \cite{IsaevPyatovRittenberg2001, PyatovTwarock2002}, ambiskew polynomial \cite{Jordan2000}, solvable polynomial rings \cite{KandryWeispfenning1990}, almost normalizing extensions \cite{McConnellRobson2001}, skew bi-quadratic algebras, and some families of diskew polynomial rings and degree-one generalized Weyl algebras \cite{Bavula2020} or rank-one hyperbolic algebras \cite{Rosenberg1995}. Different relations between SPBW extensions and other noncommutative algebras having PBW bases can be find in \cite{BuesoTorrecillasVerschoren2003, GomezTorrecillas2014, Levandovskyy2005, Li2002, SeilerBook2010}). 

\begin{definition}[{\cite[Definition 1]{GallegoLezama2011}}]\label{defpbwextension} 
Let $R$ and $A$ be rings. We say that $A$ is a {\it SPBW extension} over $R$ if the following conditions hold:
\begin{itemize}
\item [(i)] $R$ is a subring of $A$ sharing the same identity element.

\item [(ii)] There exist elements $x_1, \ldots, x_n \in A\ \backslash\ R$ such that $A$ is a left free $R$-module with basis given by the set $\text{Mon}(A):= \{ x^{\alpha} = x_{1}^{\alpha_1} \cdots x_{n}^{\alpha_n}\mid \alpha =(\alpha_1, \ldots, \alpha_n) \in \mathbb{N}^n\}$.

\item[(iii)] For each $1 \leq i \leq n$ and any $r \in R\  \backslash\ \{0\}$, there exists an element $c_{i,r} \in R \ \backslash\ \{0\}$ such that $x_ir - c_{i,r}x_i \in R$.

\item[\rm (iv)]For $1\leq i, j\leq n$, there exists an element $d_{i,j}\in R\ \backslash\ \{0\}$ such that
\[
x_jx_i-d_{i,j}x_ix_j\in R+Rx_1+\cdots +Rx_n,
\]
i.e., there exist elements $r_0^{(i,j)}, r_1^{(i,j)}, \dotsc, r_n^{(i,j)} \in R$ with
\begin{center}\label{relSPBW}
$x_jx_i - d_{i,j}x_ix_j = r_0^{(i,j)} + \sum_{k=1}^{n} r_k^{(i,j)}x_k$.    
\end{center}
\end{itemize}
\end{definition}

We use freely the notation $A = \sigma(R)\langle x_1,\dotsc, x_n\rangle$ to denote a SPBW extension $A$ over a ring $R$ in the indeterminates $x_1, \dotsc, x_n$. $R$ is called the {\em ring of coefficients} of the extension $A$. Since $\text{Mon}(A)$ is a left $R$-basis of $A$, the elements $c_{i,r}$ and $d_{i,j}$ in Definition \ref{defpbwextension} are unique. Every element $f \in A\  \backslash\ \{0\}$ has a unique representation as $f = \sum_{i=0}^tr_iX_i$, with $r_i \in R\ \backslash\ \{0\}$ and $X_i \in \text{Mon}(A)$ for $0 \leq i \leq t$ with $X_0=1$. When necessary, we use the notation $f = \sum_{i=0}^tr_iY_i$. For $X=x^{\alpha}\in\text{Mon}(A)$, ${\rm exp}(X):=\alpha$ and $\deg(X):=|\alpha|$. If $f$ is an element as in (v), then $\deg(f):=\max\{\deg(X_i)\}_{i=1}^t$ \cite[Remark 2 and Definition 6]{GallegoLezama2011}.

The following proposition shows explicitly the relation between Ore extensions and SPBW extensions.

\begin{proposition}[{\cite[Proposition 3]{GallegoLezama2011}}]\label{GallegoLezama2011Proposition3}
If $A = \sigma(R)\langle x_1,\dotsc, x_n\rangle$ is a SPBW extension over $R$, then for each $1 \leq i \leq n$, there exist an injective endomorphism $\sigma_i : R \to R$ and a $\sigma_i$-derivation $\delta_i: R \to R$ such that $x_ir = \sigma_i(r)x_i + \delta_i(r)$, for each $r\in R$.  
\end{proposition}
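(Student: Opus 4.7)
The plan is to read off the endomorphism and derivation directly from condition (iii) of Definition \ref{defpbwextension}, and then to verify their algebraic properties by exploiting the uniqueness of expansions in the left $R$-basis $\text{Mon}(A)$. Fix $i$ with $1 \le i \le n$. For $r \in R\setminus\{0\}$ I would set $\sigma_i(r) := c_{i,r}$, where $c_{i,r}$ is the element furnished by condition (iii), and extend by $\sigma_i(0):=0$. Then condition (iii) says that the remainder $\delta_i(r) := x_i r - \sigma_i(r)\, x_i$ lies in $R$. By construction, $x_i r = \sigma_i(r)\, x_i + \delta_i(r)$ for every $r\in R$. Because $\text{Mon}(A) = \{1, x_1, \dotsc, x_n, x_1^2, \dotsc\}$ is a left $R$-basis of $A$, the pair $(\sigma_i(r),\delta_i(r))$ is the unique pair in $R\times R$ producing such a presentation of $x_i r$; this uniqueness is the linchpin of everything that follows.

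Next I would verify additivity of both maps. Given $r, r' \in R$, expanding $x_i(r+r')$ in two ways and using distributivity inside $A$ yields
\[
\sigma_i(r+r')\, x_i + \delta_i(r+r') = \bigl(\sigma_i(r)+\sigma_i(r')\bigr)\, x_i + \bigl(\delta_i(r)+\delta_i(r')\bigr),
\]
and uniqueness forces $\sigma_i(r+r') = \sigma_i(r)+\sigma_i(r')$ and analogously for $\delta_i$. For the multiplicative structure I would compute $x_i(rs)$ in two ways: on one side it equals $\sigma_i(rs)\, x_i + \delta_i(rs)$, and on the other,
\[
(x_i r)s = \bigl(\sigma_i(r)\, x_i + \delta_i(r)\bigr)s = \sigma_i(r)\sigma_i(s)\, x_i + \sigma_i(r)\delta_i(s) + \delta_i(r)s.
\]
Comparing coefficients of $x_i$ and $1$ gives both $\sigma_i(rs) = \sigma_i(r)\sigma_i(s)$ and the twisted Leibniz rule $\delta_i(rs) = \sigma_i(r)\delta_i(s) + \delta_i(r)s$. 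Applying the same trick to $x_i\cdot 1 = x_i$ yields $\sigma_i(1)=1$ and $\delta_i(1)=0$, so $\sigma_i$ is a ring endomorphism of $R$ and $\delta_i$ is a $\sigma_i$-derivation.

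Finally, injectivity of $\sigma_i$ follows immediately from condition (iii) of Definition \ref{defpbwextension}: for any $r\neq 0$, the element $c_{i,r}=\sigma_i(r)$ is required to be nonzero, so $\ker \sigma_i = 0$. I expect no genuine obstacle in this proof; the only care needed is in appealing to the uniqueness of left $R$-coefficients with respect to $\text{Mon}(A)$ (rather than to any ad hoc polynomial-degree argument), since that is what makes $\sigma_i$ and $\delta_i$ well defined and enforces the identities above.
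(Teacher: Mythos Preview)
Your argument is correct and is exactly the standard proof of this fact. The paper itself does not supply a proof of this proposition; it merely cites it from \cite[Proposition~3]{GallegoLezama2011}, so there is no in-paper argument to compare against. Your reconstruction---defining $\sigma_i(r):=c_{i,r}$ and $\delta_i(r):=x_ir-\sigma_i(r)x_i$, then using uniqueness of coefficients in the left $R$-basis $\text{Mon}(A)$ to force additivity, multiplicativity, the twisted Leibniz rule, and injectivity---is precisely the argument given in the original Gallego--Lezama source.
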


We use the notation $\Sigma:=\{\sigma_1,\dots,\sigma_n\}$ and $\Delta:=\{\delta_1,\dots,\delta_n\}$ and say that the pair $(\Sigma, \Delta)$ is a \textit{system of endomorphisms and $\Sigma$-derivations} of $R$ with respect to $A$. For $\alpha = (\alpha_1, \dots , \alpha_n) \in \mathbb{N}^n$, $\sigma^{\alpha}:= \sigma_1^{\alpha_1}\circ \cdots \circ \sigma_n^{\alpha_n}$, $\delta^{\alpha} := \delta_1^{\alpha_1} \circ \cdots \circ \delta_n^{\alpha_n}$, where $\circ$ denotes the classical composition of functions. The system $\Sigma$ is commutative if $\sigma_i \circ \sigma_j = \sigma_j \circ \sigma_i$ for every $1 \le i, j \le n$. The commutativity for $\Delta$ is defined similarly. The system $(\Sigma, \Delta)$ is commutative if both $\Sigma$ and $\Delta$ are commutative \cite[Definition 2.1]{LezamaAcostaReyes2015}.

\subsection{Differential smoothness}\label{DefinitionsandpreliminariesDSA}

We follow Brzezi\'nski and Sitarz's presentation on differential smoothness carried out in \cite[Section 2]{BrzezinskiSitarz2017} (c.f. \cite{Brzezinski2008, Brzezinski2014}).

\begin{definition}[{\cite[Section 2.1]{BrzezinskiSitarz2017}}]
\begin{enumerate}
    \item [\rm (i)] A {\em differential graded algebra} is a non-negatively graded algebra $\Omega$ with the product denoted by $\wedge$ together with a degree-one linear map $d:\Omega^{\bullet} \to \Omega^{\bullet +1}$ that satisfies the graded Leibniz rule and is such that $d \circ d = 0$. 
    
    \item [\rm (ii)] A differential graded algebra $(\Omega, d)$ is a {\em calculus over an algebra} $A$ if $\Omega^0 A = A$ and $\Omega^n A = A\ dA \wedge dA \wedge \dotsb \wedge dA$ ($dA$ appears $n$-times) for all $n\in \mathbb{N}$ (this last is called the {\em density condition}). We write $(\Omega A, d)$ with $\Omega A = \bigoplus_{n\in \mathbb{N}} \Omega^{n}A$. By using the Leibniz rule, it follows that $\Omega^n A = dA \wedge dA \wedge \dotsb \wedge dA\ A$. A differential calculus $\Omega A$ is said to be {\em connected} if ${\rm ker}(d\mid_{\Omega^0 A}) = \Bbbk$.
    
    \item [\rm (iii)] A calculus $(\Omega A, d)$ is said to have {\em dimension} $n$ if $\Omega^n A\neq 0$ and $\Omega^m A = 0$ for all $m > n$. An $n$-dimensional calculus $\Omega A$ {\em admits a volume form} if $\Omega^n A$ is isomorphic to $A$ as a left and right $A$-module. 
\end{enumerate}
\end{definition}

The existence of a right $A$-module isomorphism means that there is a free generator, say $\omega$, of $\Omega^n A$ (as a right $A$-module), i.e. $\omega \in \Omega^n A$, such that all elements of $\Omega^n A$ can be uniquely expressed as $\omega a$ with $a \in A$. If $\omega$ is also a free generator of $\Omega^n A$ as a left $A$-module, this is said to be a {\em volume form} on $\Omega A$.

The right $A$-module isomorphism $\Omega^n A \to A$ corresponding to a volume form $\omega$ is denoted by $\pi_{\omega}$, i.e.
\begin{equation}\label{BrzezinskiSitarz2017(2.1)}
\pi_{\omega} (\omega a) = a, \quad {\rm for\ all}\ a\in A.
\end{equation}

By using that $\Omega^n A$ is also isomorphic to $A$ as a left $A$-module, any free generator $\omega $ induces an algebra endomorphism $\nu_{\omega}$ of $A$ by the formula
\begin{equation}\label{BrzezinskiSitarz2017(2.2)}
    a \omega = \omega \nu_{\omega} (a).
\end{equation}

Note that if $\omega$ is a volume form, then $\nu_{\omega}$ is an algebra automorphism.

Now, we proceed to recall the key ingredients of the {\em integral calculus} on $A$ as dual to its differential calculus. For more details, see Brzezinski et al. \cite{Brzezinski2008, BrzezinskiElKaoutitLomp2010}.

Let $(\Omega A, d)$ be a differential calculus on $A$. The space of $n$-forms $\Omega^n A$ is an $A$-bimodule. Consider $\mathcal{I}_{n}A$ the right dual of $\Omega^{n}A$, the space of all right $A$-linear maps $\Omega^{n}A\rightarrow A$, that is, $\mathcal{I}_{n}A := {\rm Hom}_{A}(\Omega^{n}(A),A)$. Notice that each of the $\mathcal{I}_{n}A$ is an $A$-bimodule with the actions
\begin{align*}
    (a\cdot\phi\cdot b)(\omega)=a\phi(b\omega),\quad {\rm for\ all}\ \phi \in \mathcal{I}_{n}A,\ \omega \in \Omega^{n}A\ {\rm and}\ a,b \in A.
\end{align*}

The direct sum of all the $\mathcal{I}_{n}A$, that is, $\mathcal{I}A = \bigoplus\limits_{n} \mathcal{I}_n A$, is a right $\Omega A$-module with action given by
\begin{align}\label{BrzezinskiSitarz2017(2.3)}
    (\phi\cdot\omega)(\omega')=\phi(\omega\wedge\omega'),\quad {\rm for\ all}\ \phi\in\mathcal{I}_{n + m}A, \ \omega\in \Omega^{n}A \ {\rm and} \ \omega' \in \Omega^{m}A.
\end{align}

\begin{definition}[{\cite[Definition 2.1]{Brzezinski2008}}]
A {\em divergence} (also called {\em hom-connection}) on $A$ is a linear map $\nabla: \mathcal{I}_1 A \to A$ such that
\begin{equation}\label{BrzezinskiSitarz2017(2.4)}
    \nabla(\phi \cdot a) = \nabla(\phi) a + \phi(da), \quad {\rm for\ all}\ \phi \in \mathcal{I}_1 A \ {\rm and} \ a \in A.
\end{equation}  
\end{definition}

Note that a divergence can be extended to the whole of $\mathcal{I}A$, 
\[
\nabla_n: \mathcal{I}_{n+1} A \to \mathcal{I}_{n} A,
\]

by considering
\begin{equation}\label{BrzezinskiSitarz2017(2.5)}
\nabla_n(\phi)(\omega) = \nabla(\phi \cdot \omega) + (-1)^{n+1} \phi(d \omega), \quad {\rm for\ all}\ \phi \in \mathcal{I}_{n+1}(A)\ {\rm and} \ \omega \in \Omega^n A.
\end{equation}

By putting together (\ref{BrzezinskiSitarz2017(2.4)}) and (\ref{BrzezinskiSitarz2017(2.5)}), we get the Leibniz rule 
\begin{equation}
    \nabla_n(\phi \cdot \omega) = \nabla_{m + n}(\phi) \cdot \omega + (-1)^{m + n} \phi \cdot d\omega,
\end{equation}

for all elements $\phi \in \mathcal{I}_{m + n + 1} A$ and $\omega \in \Omega^m A$ \cite[Lemma 3.2]{Brzezinski2008}. In the case $n = 0$, if ${\rm Hom}_A(A, M)$ is canonically identified with $M$, then $\nabla_0$ reduces to the classical Leibniz rule.

\begin{definition}[{\cite[Definition 3.4]{Brzezinski2008}}]
The right $A$-module map 
$$
F = \nabla_0 \circ \nabla_1: {\rm Hom}_A(\Omega^{2} A, M) \to M
$$ is called a {\em curvature} of a hom-connection $(M, \nabla_0)$. $(M, \nabla_0)$ is said to be {\em flat} if its curvature is the zero map, that is, if $\nabla \circ \nabla_1 = 0$. This condition implies that $\nabla_n \circ \nabla_{n+1} = 0$ for all $n\in \mathbb{N}$.
\end{definition}

$\mathcal{I} A$ together with the $\nabla_n$ form a chain complex called the {\em complex of integral forms} over $A$. The cokernel map of $\nabla$, that is, $\Lambda: A \to {\rm Coker} \nabla = A / {\rm Im} \nabla$ is said to be the {\em integral on $A$ associated to} $\mathcal{I}A$.

Given a left $A$-module $X$ with action $a\cdot x$, for all $a\in A,\ x \in X$, and an algebra automorphism $\nu$ of $A$, the notation $^{\nu}X$ stands for $X$ with the $A$-module structure twisted by $\nu$, i.e. with the $A$-action $a\otimes x \mapsto \nu(a)\cdot x $.

The following definition of an \textit{integrable differential calculus} seeks to portray a version of Hodge star isomorphisms between the complex of differential forms of a differentiable manifold and a complex of dual modules of it \cite[p. 112]{Brzezinski2015}. 

\begin{definition}[{\cite[Definition 2.1]{BrzezinskiSitarz2017}}]
An $n$-dimensional differential calculus $(\Omega A, d)$ is said to be {\em integrable} if $(\Omega A, d)$ admits a complex of integral forms $(\mathcal{I}A, \nabla)$ for which there exist an algebra automorphism $\nu$ of $A$ and $A$-bimodule isomorphisms \linebreak $\Theta_k: \Omega^{k} A \to ^{\nu} \mathcal{I}_{n-k}A$, $k = 0, \dotsc, n$, rendering commmutative the following diagram:
\[
\begin{tikzcd}
A \arrow{r}{d} \arrow{d}{\Theta_0} & \Omega^{1} A \arrow{d}{\Theta_1} \arrow{r}{d} & \Omega^2 A  \arrow{d}{\Theta_2} \arrow{r}{d} & \dotsb \arrow{r}{d} & \Omega^{n-1} A \arrow{d}{\Theta_{n-1}} \arrow{r}{d} & \Omega^n A  \arrow{d}{\Theta_n} \\ ^{\nu} \mathcal{I}_n A \arrow[swap]{r}{\nabla_{n-1}} & ^{\nu} \mathcal{I}_{n-1} A \arrow[swap]{r}{\nabla_{n-2}} & ^{\nu} \mathcal{I}_{n-2} A \arrow[swap]{r}{\nabla_{n-3}} & \dotsb \arrow[swap]{r}{\nabla_{1}} & ^{\nu} \mathcal{I}_{1} A \arrow[swap]{r}{\nabla} & ^{\nu} A
\end{tikzcd}
\]

The $n$-form $\omega:= \Theta_n^{-1}(1)\in \Omega^n A$ is called an {\em integrating volume form}. 
\end{definition}

The algebra of complex matrices $M_n(\mathbb{C})$ with the $n$-dimensional calculus generated by derivations presented by Dubois-Violette et al. \cite{DuboisViolette1988, DuboisVioletteKernerMadore1990}, the quantum group $SU_q(2)$ with the three-dimensional left covariant calculus developed by Woronowicz \cite{Woronowicz1987} and the quantum standard sphere with the restriction of the above calculus, are examples of algebras admitting integrable calculi. For more details on the subject, see Brzezi\'nski et al. \cite{BrzezinskiElKaoutitLomp2010}. 

The following proposition shows that the integrability of a differential calculus can be defined without explicit reference to integral forms. This allows us to guarantee the integrability by considering the existence of finitely generator elements that allow to determine left and right components of any homogeneous element of $\Omega(A)$.

\begin{proposition}[{\cite[Theorem 2.2]{BrzezinskiSitarz2017}}]\label{integrableequiva} 
Let $(\Omega A, d)$ be an $n$-dimensional differential calculus over an algebra $A$. The following assertions are equivalent:
\begin{enumerate}
    \item [\rm (1)] $(\Omega A, d)$ is an integrable differential calculus.
    
    \item [\rm (2)] There exists an algebra automorphism $\nu$ of $A$ and $A$-bimodule isomorphisms $\Theta_k : \Omega^k A \rightarrow \ ^{\nu}\mathcal{I}_{n-k}A$, $k =0, \ldots, n$, such that, for all $\omega'\in \Omega^k A$ and $\omega''\in \Omega^mA$,
    \begin{align*}
        \Theta_{k+m}(\omega'\wedge\omega'')=(-1)^{(n-1)m}\Theta_k(\omega')\cdot\omega''.
    \end{align*}
    
    \item [\rm (3)] There exists an algebra automorphism $\nu$ of $A$ and an $A$-bimodule map $\vartheta:\Omega^nA\rightarrow\ ^{\nu}A$ such that all left multiplication maps
    \begin{align*}
    \ell_{\vartheta}^{k}:\Omega^k A &\ \rightarrow \mathcal{I}_{n-k}A, \\
    \omega' &\ \mapsto \vartheta\cdot\omega', \quad k = 0, 1, \dotsc, n,
    \end{align*}
    where the actions $\cdot$ are defined by {\rm (}\ref{BrzezinskiSitarz2017(2.3)}{\rm )}, are bijective.
    
    \item [\rm (4)] $(\Omega A, d)$ has a volume form $\omega$ such that all left multiplication maps
    \begin{align*}
        \ell_{\pi_{\omega}}^{k}:\Omega^k A &\ \rightarrow \mathcal{I}_{n-k}A, \\
        \omega' &\ \mapsto \pi_{\omega} \cdot \omega', \quad k=0,1, \dotsc, n-1,
    \end{align*}
    
    where $\pi_{\omega}$ is defined by {\rm (}\ref{BrzezinskiSitarz2017(2.1)}{\rm )}, are bijective.
\end{enumerate}
\end{proposition}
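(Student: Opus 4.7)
The plan is to run the cycle $(1)\Rightarrow (2)\Rightarrow (3)\Rightarrow (4)\Rightarrow (1)$, using throughout the canonical identification $\mathcal{I}_0 A \cong A$ by evaluation at $1$, so that $\Theta_n$ and the map $\vartheta$ appearing in (3) can be treated on the same footing, and using (\ref{BrzezinskiSitarz2017(2.3)}) to convert the right $\Omega A$-action on $\mathcal{I}A$ into evaluation on wedge products.

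For $(1)\Rightarrow(2)$, I would proceed by induction on $m$. The density condition lets us reduce to $\omega'' = d\omega''_0$ with $\omega''_0\in\Omega^{m-1}A$, and the base case $m=0$ is just the $A$-bimodule property of $\Theta_k$. In the inductive step, the graded Leibniz rule for $d$ rewrites $\omega'\wedge d\omega''_0$ in terms of $d(\omega'\wedge\omega''_0)$ and $d\omega'\wedge\omega''_0$; commutativity of the ladder in (1) replaces $\Theta_{k+m}\circ d$ by $\nabla_{\bullet}\circ\Theta_{k+m-1}$; and the Leibniz rule (\ref{BrzezinskiSitarz2017(2.5)}) for the extended divergence splits the resulting $\nabla$-term into a piece which cancels the contribution of $\Theta_{k+m}(d\omega'\wedge\omega''_0)$ and a piece equal to $(-1)^{(n-1)m}\,\Theta_k(\omega')\cdot d\omega''_0$. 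The required sign is accumulated exactly at this step.

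For $(2)\Rightarrow(3)$, I would set $\vartheta:=\Theta_n$, viewed as an $A$-bimodule map $\Omega^n A\to {}^{\nu}A$ via the canonical identification. For $\omega'\in\Omega^k A$ and $\omega''\in\Omega^{n-k}A$, assertion (2) together with (\ref{BrzezinskiSitarz2017(2.3)}) gives
\[
\ell_\vartheta^k(\omega')(\omega'') = \vartheta(\omega'\wedge\omega'') = (-1)^{(n-1)(n-k)}\,\Theta_k(\omega')(\omega''),
\]
so $\ell_\vartheta^k$ agrees with $\Theta_k$ up to a global sign and is therefore bijective. For $(3)\Rightarrow(4)$, the case $k=n$ of $\ell_\vartheta^n$ is essentially $\vartheta$ itself, which is then an $A$-bimodule isomorphism $\Omega^n A\to {}^{\nu}A$; consequently $\omega:=\vartheta^{-1}(1)$ is a free generator of $\Omega^n A$ as both a left and a right $A$-module, i.e.\ a volume form. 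Comparing with (\ref{BrzezinskiSitarz2017(2.1)}) and (\ref{BrzezinskiSitarz2017(2.2)}) forces $\nu=\nu_\omega$ and $\vartheta=\pi_\omega$, so each $\ell_{\pi_\omega}^k=\ell_\vartheta^k$ is bijective.

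Finally, for $(4)\Rightarrow(1)$, I would transport the differential to the $\mathcal{I}_{n-k}A$ by setting $\Theta_k:=\ell_{\pi_\omega}^k$ for $k<n$, $\Theta_n:=\pi_\omega$, and defining $\nabla_{n-k-1}:=\Theta_{k+1}\circ d\circ\Theta_k^{-1}$. What remains is to check that $\nabla:=\nabla_0$ is a hom-connection in the sense of (\ref{BrzezinskiSitarz2017(2.4)}) and that the higher $\nabla_j$ agree with its canonical extensions (\ref{BrzezinskiSitarz2017(2.5)}); both reduce to the graded Leibniz rule for $d$ and (\ref{BrzezinskiSitarz2017(2.3)}). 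The main obstacle throughout is sign bookkeeping: matching the $(-1)^{(n-1)m}$ of (2), the $(-1)^{n+1}$ of (\ref{BrzezinskiSitarz2017(2.5)}), and the graded Leibniz signs requires careful tracking of degrees at every inductive step, but no new ideas beyond these Leibniz rules and the identification $\mathcal{I}_0 A\cong A$ are needed.
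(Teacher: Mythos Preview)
The paper does not contain a proof of this proposition: it is quoted verbatim as \cite[Theorem 2.2]{BrzezinskiSitarz2017} and used only as a tool (in particular via Proposition~\ref{BrzezinskiSitarz2017Lemmas2.6and2.7}) in the proof of Theorem~\ref{smoothSPBWn}. There is therefore nothing in the present paper to compare your argument against; the proof lives in the Brzezi\'nski--Sitarz reference, and your cycle $(1)\Rightarrow(2)\Rightarrow(3)\Rightarrow(4)\Rightarrow(1)$ is a reasonable sketch of how such results are typically established, but any detailed comparison would have to be made against that source rather than this paper.
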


A volume form $\omega\in \Omega^nA$ is an {\em integrating form} if and only if it satisfies condition $(4)$ of Proposition \ref{integrableequiva} \cite[Remark 2.3]{BrzezinskiSitarz2017}.

The most interesting cases of differential calculi are those where $\Omega^k A$ are finitely generated and projective right or left (or both) $A$-modules \cite{Brzezinski2011}.

\begin{proposition}\label{BrzezinskiSitarz2017Lemmas2.6and2.7}
\begin{enumerate}
\item [\rm (1)] \cite[Lemma 2.6]{BrzezinskiSitarz2017} Consider $(\Omega A, d)$ an integrable and $n$-dimensional calculus over $A$ with integrating form $\omega$. Then $\Omega^{k} A$ is a finitely generated projective right $A$-module if there exist a finite number of forms $\omega_i \in \Omega^{k} A$ and $\overline{\omega}_i \in \Omega^{n-k} A$ such that, for all $\omega' \in \Omega^{k} A$, we have that 
\begin{equation*}
\omega' = \sum_{i} \omega_i \pi_{\omega} (\overline{\omega}_i \wedge \omega').
\end{equation*}

\item [\rm (2)] \cite[Lemma 2.7]{BrzezinskiSitarz2017} Let $(\Omega A, d)$ be an $n$-dimensional calculus over $A$ admitting a volume form $\omega$. Assume that for all $k = 1, \ldots, n-1$, there exists a finite number of forms $\omega_{i}^{k},\overline{\omega}_{i}^{k} \in \Omega^{k}(A)$ such that for all $\omega'\in \Omega^kA$, we have that
\begin{equation*}
\omega'=\displaystyle\sum_i\omega_{i}^{k}\pi_\omega(\overline{\omega}_{i}^{n-k}\wedge\omega')=\displaystyle\sum_i\nu_{\omega}^{-1}(\pi_\omega(\omega'\wedge\omega_{i}^{n-k}))\overline{\omega}_{i}^{k},
\end{equation*}

where $\pi_{\omega}$ and $\nu_{\omega}$ are defined by {\rm (}\ref{BrzezinskiSitarz2017(2.1)}{\rm )} and {\rm (}\ref{BrzezinskiSitarz2017(2.2)}{\rm )}, respectively. Then $\omega$ is an integral form and all the $\Omega^{k}A$ are finitely generated and projective as left and right $A$-modules.
\end{enumerate}
\end{proposition}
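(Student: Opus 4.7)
The plan is to treat the two parts separately, since part (1) is essentially the dual basis lemma applied to a particular choice of functionals, while part (2) first establishes integrability through Proposition \ref{integrableequiva}(4) and then reduces projectivity on each side to (a one-sided version of) part (1).

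For part (1), I would define, for each index $i$, the map $\phi_i: \Omega^k A \to A$ by $\phi_i(\omega') := \pi_{\omega}(\overline{\omega}_i \wedge \omega')$. Right $A$-linearity of $\phi_i$ follows from right $A$-linearity of $\pi_{\omega}$ (itself a right $A$-module isomorphism $\Omega^n A \to A$ by (\ref{BrzezinskiSitarz2017(2.1)})) combined with the right $A$-linearity of the wedge product in its second argument. The hypothesis is then precisely the statement that $\{(\omega_i,\phi_i)\}$ is a finite dual basis for $\Omega^k A$ as a right $A$-module, so $\Omega^k A$ is finitely generated and projective on the right.

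For part (2), the key step is to verify condition (4) of Proposition \ref{integrableequiva}, namely that $\ell_{\pi_{\omega}}^k: \Omega^k A \to \mathcal{I}_{n-k} A$, $\omega' \mapsto \pi_{\omega}\cdot\omega'$, is bijective for $k=0,\ldots,n-1$. Using (\ref{BrzezinskiSitarz2017(2.3)}), one has $(\pi_{\omega}\cdot\omega')(\omega'') = \pi_{\omega}(\omega'\wedge\omega'')$. Injectivity follows directly from the second decomposition: if $\pi_{\omega}\cdot\omega' = 0$ then $\pi_{\omega}(\omega'\wedge\omega_i^{n-k})=0$ for every $i$, and the formula $\omega' = \sum_i \nu_{\omega}^{-1}(\pi_{\omega}(\omega'\wedge\omega_i^{n-k}))\overline{\omega}_i^k$ forces $\omega' = 0$. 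For surjectivity, given $\phi \in \mathcal{I}_{n-k} A$, I would set $\omega'_\phi := \sum_i \omega_i^k\, \phi(\overline{\omega}_i^{n-k})$ and check that $\pi_{\omega}\cdot \omega'_\phi = \phi$ by applying both sides to an arbitrary $\omega''\in\Omega^{n-k}A$ and invoking the first decomposition expanded on $\overline{\omega}_i^{n-k}$ paired with $\omega''$. Once $\omega$ is an integrating form, part (1) gives right projectivity at once; for left projectivity, one mirrors the argument using the second decomposition, defining left $A$-linear functionals $\psi_i(\omega') := \nu_{\omega}^{-1}(\pi_{\omega}(\omega' \wedge \omega_i^{n-k}))$ whose left linearity is verified using (\ref{BrzezinskiSitarz2017(2.2)}) to commute $\nu_{\omega}^{-1}$ past the left $A$-action on $\omega'$.

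The main obstacle I anticipate is not the dual basis step itself but the careful bookkeeping of the twist by $\nu_{\omega}$. Whenever a left $A$-action is converted to a right one through the volume form, the automorphism $\nu_{\omega}$ intervenes, and one needs to confirm that the candidate left-dual functionals $\psi_i$ genuinely land in $\mathrm{Hom}({}_A\Omega^k A,\,{}_A A)$ rather than in a twisted hom-space. A secondary technical point is the surjectivity verification in the proof of integrability, where the two given decompositions must be combined in the right order so that, after pulling $\phi$ through $\pi_{\omega}$, one recovers the identity on $\Omega^{n-k}A$ applied to $\omega''$; this is where the symmetry between the $\omega_i^k$ and the $\overline{\omega}_i^{n-k}$ in the hypothesis is essential.
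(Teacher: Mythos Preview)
The paper does not supply a proof of this proposition; it is stated with citations to \cite[Lemmas 2.6 and 2.7]{BrzezinskiSitarz2017} and used as a black box in the proof of Theorem~\ref{smoothSPBWn}. So there is no in-paper argument to compare against.

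That said, your sketch is essentially the argument given in the original Brzezi\'nski--Sitarz source: part~(1) is exactly the dual basis lemma with the functionals $\phi_i = \pi_\omega(\overline{\omega}_i\wedge\,-\,)$, and for part~(2) your injectivity argument and your left/right projectivity arguments via the two decompositions are correct (your verification that $\psi_i(\omega')=\nu_\omega^{-1}(\pi_\omega(\omega'\wedge\omega_i^{n-k}))$ is left $A$-linear is the right use of~(\ref{BrzezinskiSitarz2017(2.2)})). One small correction on the surjectivity step: your candidate preimage $\omega'_\phi=\sum_i \omega_i^k\,\phi(\overline{\omega}_i^{n-k})$ does not obviously work, because $\phi$ is only right $A$-linear and the scalar $\phi(\overline{\omega}_i^{n-k})$ ends up on the \emph{left} of $\omega''$ after moving it across the wedge. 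The choice that makes the computation close cleanly is
\[
\omega'_\phi \;=\; \sum_i \nu_\omega^{-1}\!\bigl(\phi(\omega_i^{n-k})\bigr)\,\overline{\omega}_i^{\,k},
\]
for which $(\pi_\omega\cdot\omega'_\phi)(\omega'') = \sum_i \phi(\omega_i^{n-k})\,\pi_\omega(\overline{\omega}_i^{\,k}\wedge\omega'') = \phi\bigl(\sum_i \omega_i^{n-k}\pi_\omega(\overline{\omega}_i^{\,k}\wedge\omega'')\bigr)=\phi(\omega'')$ by the first decomposition applied in degree $n-k$. You already anticipated that the decompositions must be combined ``in the right order''; this is the order.
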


Brzezi\'nski and Sitarz \cite[p. 421]{BrzezinskiSitarz2017} asserted that to connect the integrability of the differential graded algebra $(\Omega A, d)$ with the algebra $A$, it is necessary to relate the dimension of the differential calculus $\Omega A$ with that of $A$, and since we are dealing with algebras that are deformations of coordinate algebras of affine varieties, the {\em Gelfand-Kirillov dimension} introduced by Gelfand and Kirillov \cite{GelfandKirillov1966, GelfandKirillov1966b} seems to be the best suited. Briefly, given an affine $\Bbbk$-algebra $A$, the {\em Gelfand-Kirillov dimension of} $A$, denoted by ${\rm GKdim}(A)$, is given by
\[
{\rm GKdim}(A) := \underset{n\to \infty}{\rm lim\ sup} \frac{{\rm log}({\rm dim}\ V^{n})}{{\rm log}\ n},
\]

where $V$ is a finite-dimensional subspace of $A$ that generates $A$ as an algebra. This definition is independent of choice of $V$. If $A$ is not affine, then its Gelfand-Kirillov dimension is defined to be the supremum of the Gelfand-Kirillov dimensions of all affine subalgebras of $A$. An affine domain of Gelfand-Kirillov dimension zero is precisely a division ring that is finite-dimensional over its center. In the case of an affine domain of Gelfand-Kirillov dimension one over $\Bbbk$, this is precisely a finite module over its center, and thus polynomial identity. In some sense, this dimensions measures the deviation of the algebra $A$ from finite dimensionality. For more details about this dimension, see the excellent treatment developed by Krause and Lenagan \cite{KrauseLenagan2000}.

After preliminaries above, we arrive to the key notion of this paper.

\begin{definition}[{\cite[Definition 2.4]{BrzezinskiSitarz2017}}]\label{BrzezinskiSitarz2017Definition2.4}
An affine algebra $A$ with integer Gelfand-Kirillov dimension $n$ is said to be {\em differentially smooth} if it admits an $n$-dimensional connected integrable differential calculus $(\Omega A, d)$.
\end{definition}

Definition \ref{BrzezinskiSitarz2017Definition2.4} shows that a differentially smooth algebra comes equipped with a well-behaved differential structure and with the precise concept of integration \cite[p. 2414]{BrzezinskiLomp2018}.

\begin{example}\label{Brzezinski2015DSOEbiquadratic}
\begin{enumerate}
    \item [\rm (i)] The polynomial algebra $\Bbbk[x_1, \dotsc, x_n]$ has Gelfand-Kirillov dimension $n$ and the usual exterior algebra is an $n$-dimensional integrable calculus, whence $\Bbbk[x_1, \dotsc, x_n]$ is differentially smooth.

    \item [\rm (ii)] Brzezi{\'n}ski \cite{Brzezinski2015} characterized the differential smoothness of skew polynomial rings of the form $\Bbbk[t][x; \sigma_{q, r}, \delta_{p(t)}]$ where $\sigma_{q, r}(t) = qt + r$, with $q, r \in \Bbbk,\ q\neq 0$, and the $\sigma_{q, r}-$derivation $\delta_{p(t)}$ is defined as
\begin{equation}\label{deltap}
\delta_{p(t)} (f(t)) = \frac{f(\sigma_{q, r}(t)) - f(t)}{\sigma_{q, r}(t) - t} p(t),
\end{equation}

for an element $p(t) \in \Bbbk[t]$. $\delta_{p(t)}(f(t))$ is a suitable limit when $q = 1$ and $r = 0$, that is, when $\sigma_{q, r}$ is the identity map of $\Bbbk[t]$.

For the maps
\begin{equation}\label{Brzezinski2015(3.4)1}
\nu_t(t) = t,\quad \nu_t(x) = qx + p'(t)\quad {\rm and}\quad \nu_x(t) = \sigma_{q, r}^{-1}(t),\quad \nu_x(x) = x,
\end{equation}

where $p'(t)$ is the classical $t$-derivative of $p(t)$, Brzezi{\'n}ski \cite[Lemma 3.1]{Brzezinski2015} showed that all of them simultaneously extend to algebra automorphisms $\nu_t$ and $\nu_x$ of $\Bbbk[t][x; \sigma_{q, r}, \delta_{p(t)}]$ only in the following three cases:
    \begin{enumerate}
        \item [\rm (a)] $q = 1, r = 0$ with no restriction on $p(t)$;
        
        \item [\rm (b)] $q = 1, r\neq 0$ and $p(t) = c$, $c\in \Bbbk$;
        
        \item [\rm (c)] $q\neq 1, p(t) = c\left( t + \frac{r}{q-1} \right)$, $c\in \Bbbk$ with no restriction on $r$.
    \end{enumerate}
    
In any of the cases {\rm (a) - (c)} we have that $\nu_x \circ \nu_t = \nu_t \circ \nu_x$. If the Ore extension $\Bbbk[t][x; \sigma_{q, r}, \delta_{p(t)}]$ satisfies one of these three conditions, Brzezi{\'n}ski proved that it is differentially smooth \cite[Proposition 3.3]{Brzezinski2015}.

From Brzezi{\'n}ski's result we get that the algebras
\begin{itemize}
 \item The {\em polynomial algebra} $\Bbbk[x_1, x_2]$;
        
        \item The {\em Weyl algebra} $A_1(\Bbbk) = \Bbbk\{x_1, x_2\} / \langle x_1x_2 - x_2x_1 - 1\rangle$;
        
        \item The {\em universal enveloping algebra of the Lie algebra} $\mathfrak{n}_2 = \langle x_1, x_2\mid [x_2, x_1] = x_1\rangle$, that is, $U(\mathfrak{n}_2) = \Bbbk\{x_1, x_2\} / \langle x_2x_1 - x_1x_2 - x_1\rangle$, and
        
        \item The {\em quantum plane} ({\em Manin's plane}) $\mathcal{O}_q(\Bbbk) = \Bbbk \{x_1, x_2\} / \langle x_2 x_1 - qx_1 x_2\rangle$, where $q\in \Bbbk\ \backslash\ \{0,1\}$, and

\item {\em Jordan's plane} $\mathcal{J}(\Bbbk)$ with the relation given by $xt = tx + t^2$, 
\end{itemize}

are differentially smooth.
\end{enumerate}
\end{example}

\begin{remark}
There are examples of algebras that are not differentially smooth. Consider the commutative algebra $A = \mathbb{C}[x, y] / \langle xy \rangle$. A proof by contradiction shows that for this algebra there are no one-dimensional connected integrable calculi over $A$, so it cannot be differentially smooth \cite[Example 2.5]{BrzezinskiSitarz2017}.
\end{remark}

\section{Differential smoothness of SPBW extensions}\label{DSSPBW4originalresults}

\subsection{Extension of automorphisms and derivations }\label{ASPBWpreliminaries}

With the aim of setting up notation and render this paper self-contained, next we present the details of extended derivations over SPBW extensions on one (this is the case of the classical Ore extensions), two and $n$ generators (Propositions \ref{extaautoSPBW1}, \ref{extaautoSPBWn} and \ref{extaautoSPBWn}, respectively).

We start with the following well-known result (e.g. \cite[Section 2]{LamLeroyMatczuk1997}).

\begin{lemma}\label{commrule1}
Let $A = \sigma(R) \langle x \rangle$ be a SPBW extension over $R$, that is, $A = R[x; \sigma, \delta]$. For any $n \in \mathbb{N}$, we have that
\begin{equation*}
    x^nr = \sum_{k=0}^{n}\sum_{f_k\in C_{n-k,k}}f_k(r) x^{n-k},
\end{equation*}

where $C_{n-k,k}$ is the set of all possible compositions between $n-k$ $\sigma$'s and $k$ $\delta$'s. In addition, if $\sigma \circ \delta = \delta \circ \sigma$, we get that
\begin{equation*}
x^n r = \sum_{k=0}^{n}\binom{n}{k}\sigma^{n-k}\circ\delta^{k}(r) x^{n-k}.
\end{equation*}
\end{lemma}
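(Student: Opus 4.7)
The plan is to proceed by induction on $n$. The base case $n=0$ is immediate, since $x^0 r = r$ and the right-hand side is the single term corresponding to the empty composition ($k=0$, $n-k=0$). The case $n=1$ reduces to the defining relation $xr = \sigma(r)x + \delta(r)$ guaranteed by $A = R[x;\sigma,\delta]$, where $C_{1,0}=\{\sigma\}$ and $C_{0,1}=\{\delta\}$.

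For the inductive step, assume the formula holds at level $n$. Multiplying on the left by $x$ and applying the defining relation term-by-term yields
\begin{equation*}
x^{n+1} r \;=\; \sum_{k=0}^{n}\sum_{f_k \in C_{n-k,k}} \bigl(\sigma(f_k(r))\, x^{n+1-k} + \delta(f_k(r))\, x^{n-k}\bigr).
\end{equation*}
The $\sigma$-contributions produce exactly the compositions $\sigma \circ f_k$ with $f_k \in C_{n-k,k}$; that is, all elements of $C_{n+1-k,k}$ whose outermost map is $\sigma$. Reindexing $j = k+1$ in the $\delta$-contributions produces the compositions $\delta \circ g$ with $g \in C_{n+1-j,\,j-1}$; that is, all elements of $C_{n+1-j,j}$ whose outermost map is $\delta$. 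Since every composition in $C_{n+1-k,k}$ is uniquely determined by its outermost map and its tail, the two partial sums combine without overlap or omission into
\begin{equation*}
\sum_{k=0}^{n+1}\sum_{f_k \in C_{n+1-k,k}} f_k(r)\, x^{n+1-k},
\end{equation*}
closing the induction.

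For the second formula, assume $\sigma \circ \delta = \delta \circ \sigma$. Then any composition of $n-k$ copies of $\sigma$ with $k$ copies of $\delta$ collapses to $\sigma^{n-k}\circ \delta^{k}$, independent of the chosen order. Hence each element of $C_{n-k,k}$ acts on $r$ as $\sigma^{n-k}\circ \delta^{k}(r)$, and $|C_{n-k,k}|$ simply counts the number of distinct orderings of a word with $n-k$ letters $\sigma$ and $k$ letters $\delta$, which equals $\binom{n}{k}$. Substituting into the first formula delivers the stated closed expression.

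The only real subtlety, and it is minor, lies in the bookkeeping of the inductive step: one must verify that the $\sigma$-reindexed sum and the $\delta$-reindexed sum jointly enumerate $C_{n+1-k,k}$ once each, which rests on the elementary observation that a composition word is uniquely decomposable as (outermost letter) $\circ$ (tail). Everything else is formal manipulation.
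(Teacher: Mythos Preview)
Your proof is correct and follows essentially the same approach as the paper's: induction on $n$, left-multiplication by $x$, and termwise application of the defining relation, followed by the observation that under $\sigma\delta=\delta\sigma$ every composition collapses to $\sigma^{n-k}\circ\delta^{k}$ with multiplicity $\binom{n}{k}$. Your write-up is in fact slightly more explicit than the paper's about the bookkeeping (the decomposition of $C_{n+1-k,k}$ by outermost letter), which the paper leaves implicit.
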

\begin{proof}
We proceed by induction on $n$. For $n=1$, we know that $C_0=\{\sigma\}$ and $C_1=\{\delta\}$, whence
\begin{equation*}
    xr = \sigma(r)x + \delta(r) = \sum_{k=0}^{1}\sum_{f_k\in C_{n-k,k}}f_k(r) x^{1-k}.
\end{equation*}

Suppose the relation holds for $n$. Then
\begin{align*}
    x^{n+1} r = &\ x \left(\sum_{k=0}^{n}\sum_{f_k\in C_{n-k,k}}f_k(r) x^{n-k}\right) \\
    = &\ \sum_{k=0}^{n}\sum_{f_k\in C_{n-k,k}} x f_k(r) x^{n-k} = \sum_{k=0}^{n}\sum_{f_k\in C_{n-k,k}}(\sigma(f_k(r)) x + \delta(f_k(r))) x^{n-k} \\
    = &\ \sum_{k=0}^{n}\sum_{f_k\in C_{n-k,k}}\left[\sigma(f_k(r)) x^{n+1-k}+\delta(f_k(r)) x^{n-k}\right] \\
    = &\ \sum_{k=0}^{n+1}\sum_{g_k\in C_{n+1-k,k}}g_k(r) x^{n+1-k}.
\end{align*}

If $\sigma$ and $\delta$ commute, it follows that $C_k=\{\sigma^{n-k}\circ\delta^k\}$ and the number of times that the element is repeated is $\binom{n}{k}$, which implies that 
\begin{equation*}
    x^n r = \sum_{k=0}^{n}\sum_{f_k\in C_{n-k,k}}f_k(r) x^{n-k}=\sum_{k=0}^{n}\binom{n}{k}\sigma^{n-k}\circ\delta^{k}(r) x^{n-k}.
\end{equation*}
\end{proof}

The importance of extending endomorphisms and derivations of the ring $R$ (Proposition \ref{GallegoLezama2011Proposition3}) to the extension $A$ over $R$ can be appreciated in works such as \cite{NasrMoussavi2008, ReyesSuarez2017} for the study of ring-theoretical properties. In particular, the following result appears without proof in \cite[p. 514]{NasrMoussavi2008}.

\begin{proposition}\label{extaautoSPBW1}
Let $A = \sigma(R)\langle x \rangle$ be a SPBW extension over $R$ of automorphism type. Consider the automorphism $\widetilde{\sigma}:A \rightarrow A$ given by $\widetilde{\sigma}(x) = x$ and $\widetilde{\sigma}(r)=\sigma(r)$ for each $r\in R$. If  $\sigma \circ \delta = \delta \circ \sigma$, then the map $\widetilde{\delta}:A\rightarrow A$ with $\widetilde{\delta}(f(x))=\widetilde{\delta}(a_0+a_1x+ \cdots +a_nx^n)=\delta(a_0)+\delta(a_1)x+ \cdots + \delta(a_n)x^n$ for all $a_i \in R$, $0\leq i\leq n$ is a $\widetilde{\sigma}$-derivation of $A$.
\end{proposition}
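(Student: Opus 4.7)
The plan is to verify the two defining properties of a $\widetilde{\sigma}$-derivation of $A$: additivity, which is immediate from the componentwise definition, and the twisted Leibniz rule $\widetilde{\delta}(fg) = \widetilde{\sigma}(f)\widetilde{\delta}(g) + \widetilde{\delta}(f)g$. Before doing this, I would first point out that the commutativity hypothesis $\sigma\circ\delta=\delta\circ\sigma$ is already needed to ensure that $\widetilde{\sigma}$ is a well-defined algebra endomorphism of $A$: since $A$ is presented by the relations $xr-\sigma(r)x-\delta(r)=0$, requiring $\widetilde{\sigma}(xr)=\widetilde{\sigma}(\sigma(r)x+\delta(r))$ reduces to $\delta(\sigma(r))=\sigma(\delta(r))$, which is exactly the hypothesis.

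For the Leibniz rule, I would reduce by $R$-bilinearity and additivity to monomials $f=ax^m$ and $g=bx^n$ with $a,b\in R$. Using Lemma \ref{commrule1} in its commuting form to rewrite $x^m b$, one obtains
\[
fg \;=\; \sum_{k=0}^{m}\binom{m}{k}\, a\,\sigma^{m-k}\delta^{k}(b)\, x^{m+n-k}.
\]
Applying $\widetilde{\delta}$ termwise and using that $\delta$ is a $\sigma$-derivation of $R$ together with the commutativity $\sigma\circ\delta=\delta\circ\sigma$ (so that $\delta(\sigma^{m-k}\delta^{k}(b))=\sigma^{m-k}\delta^{k+1}(b)$), the left-hand side expands to
\[
\widetilde{\delta}(fg) \;=\; \sum_{k=0}^{m}\binom{m}{k}\Bigl[\sigma(a)\,\sigma^{m-k}\delta^{k+1}(b)+\delta(a)\,\sigma^{m-k}\delta^{k}(b)\Bigr] x^{m+n-k}.
\]

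For the right-hand side, I would note that $\widetilde{\sigma}(f)=\sigma(a)x^{m}$ and $\widetilde{\delta}(g)=\delta(b)x^{n}$, and then apply Lemma \ref{commrule1} a second time to $x^{m}\delta(b)$ and to $x^{m}b$. This produces exactly the two sums appearing inside the bracket above, one for $\widetilde{\sigma}(f)\widetilde{\delta}(g)$ and one for $\widetilde{\delta}(f)g$, and matching the expansions term by term closes the argument. The only real obstacle is the bookkeeping required to track the binomial coefficients through the commutation relation; no new idea is needed beyond the commuting version of Lemma \ref{commrule1} and the hypothesis $\sigma\circ\delta=\delta\circ\sigma$.
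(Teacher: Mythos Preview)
Your argument is correct and is in fact cleaner than the paper's own proof. You use the commuting (binomial) form of Lemma~\ref{commrule1} from the outset, expand both sides of the Leibniz identity for monomials $ax^m$ and $bx^n$, and match them term by term using $\delta\circ\sigma=\sigma\circ\delta$; nothing more is needed. Your observation that the hypothesis $\sigma\delta=\delta\sigma$ is already required for $\widetilde{\sigma}$ to be a well-defined algebra endomorphism is a nice addition that the paper does not make explicit. One minor wording issue: the reduction to monomials is by \emph{biadditivity} of the defect $(f,g)\mapsto\widetilde{\delta}(fg)-\widetilde{\sigma}(f)\widetilde{\delta}(g)-\widetilde{\delta}(f)g$, not by ``$R$-bilinearity'' (none of the maps involved is left $R$-linear); this does not affect the validity of the argument.

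By contrast, the paper proceeds in the opposite direction. It leaves the values $\widetilde{\delta}(x^k)$ unspecified, writes both sides of the Leibniz rule using the \emph{non-commuting} version of Lemma~\ref{commrule1} (sums over $C_{n-k,k}$), and then \emph{derives} from the resulting identity, first, that $\sigma\delta=\delta\sigma$ is forced (by inspecting the top-degree coefficient) and, second, that all $\widetilde{\delta}(x^k)$ must vanish. This yields the necessity of the commutation hypothesis and the uniqueness of the extension as by-products, at the cost of a longer computation. Your direct verification is better suited to the statement as phrased (a sufficiency claim); the paper's approach, though more laborious, additionally shows that no other extension of $\delta$ fixing $x$ is possible.
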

\begin{proof}
    Let $p(x) = \sum\limits_{i=0}^n r_i x^i \in A $. Then
    \begin{align*}
        \widetilde{\sigma}(p(x)) = \widetilde{\sigma} \left(\sum_{i=0}^n r_i x^i\right) = \sum_{i=0}^n \widetilde{\sigma}(r_i x^i) = \sum_{i=0}^n\widetilde{\sigma} (r_i) x^i = \sum_{i=0}^n\sigma(r_i) x^i.
    \end{align*}
    
It is clear that the bijectivity of $\widetilde{\sigma}$ is inherited by $\sigma$.

Now, for $p(x) = r x^k$, we have that
\begin{equation*}
    \widetilde{\delta} (p(x)) = \widetilde{\delta}(r x^k) = \widetilde{\sigma}(r)\widetilde{\delta} (x^k) +x  \widetilde{\delta}(r) x^k = \sigma(r)\widetilde{\delta}(x^k) + \delta(r) x^k.
\end{equation*}

Let $r(x) = r x^i$ and $s(x) = s x^j$ for some $i,j \in \mathbb{N}$. The idea is to satisfy the relation given by
\begin{align*}
    \widetilde{\delta} (r(x) s(x)) = \widetilde{\sigma} (r(x)) \widetilde{\delta}(s(x)) + \widetilde{\delta} (r(x)) s(x).
\end{align*}

With this aim, note that 
\begin{align}
    \widetilde{\delta}(r(x) & s(x)) = \widetilde{\delta}(r x^is x^j) \notag \\ 
    = &\ \widetilde{\delta}\left(r\sum_{k=0}^{i}\sum_{f_k\in C_{i-k,k}}f_k(s) x^{i-k} x^j\right) \notag \\
     = &\ \widetilde{\delta}\left(\sum_{k=0}^{i}\sum_{f_k\in C_{i-k,k}} r f_k(s) x^{i-k+j}\right) \notag \\ 
     = &\ \sum_{k=0}^{i}\sum_{f_k\in C_{i-k,k}}\widetilde{\delta}\left(rf_k(s) x^{i-k+j}\right) \notag \\
     = &\ \sum_{k=0}^{i}\sum_{f_k\in C_{i-k,k}}\left(\sigma(rf_k(s))\widetilde{\delta} (x^{i-k+j}) + \delta(rf_k(s)) x^{i-k+j}\right) \notag \\
     = &\ \sum_{k=0}^{i}\sum_{f_k\in C_{i-k,k}}\left(\sigma(r)\sigma(f_k(s))\widetilde{\delta}(x^{i-k+j})+\left(\sigma(r)\delta(f_k(s))+\delta(r)f_k(s)\right) x^{i-k+j}\right) \notag 
     \end{align}
    \begin{align} 
     = &\ \sum_{k=0}^{i} \sum_{f_k\in C_{i-k,k}} \left(\sigma(r)\sigma(f_k(s))\left(x^{i-k}\widetilde{\delta}(x^j) + \widetilde{\delta}(x^{i-k}) x^j\right) \right. \notag \\
     &\ \ + \left. \left(\sigma(r)\delta(f_k(s)) + \delta(r)f_k(s)\right) x^{i-k+j} \right). \label{First24}
\end{align}

On the other hand,
{\small{
\begin{align}
    \widetilde{\sigma}(r(x)) \widetilde{\delta}(s(x)) + \widetilde{\delta}(r(x))s(x) = &\ \widetilde{\sigma}(r x^i)\widetilde{\delta}(s x^j)+\widetilde{\delta}(r x^i) sx^j \notag \\
    = &\ \sigma(r) x^i\left(\sigma(s)\widetilde{\delta}(x^j) + \delta(s) x^j\right) + \left(\sigma(r)\widetilde{\delta}(x^i) + \delta(r) x^i\right)s x^j \notag \\
    = &\ \sigma(r) x^i \sigma(s)\widetilde{\delta}(x^j) + \sigma(r) x^i\delta(s) x^j \\
    &\ + \sigma(r)\widetilde{\delta}(x^i) sx^j + \delta(r) x^is x^j \notag \\
    = &\ \sigma(r)\left(\sum_{k=0}^{i}\sum_{f_k\in C_{i-k,k}}f_k(\sigma(s)) x^{i-k}\right)\widetilde{\delta}(x^j) \\
    &\ + \sigma(r)\left(\sum_{k=0}^{i}\sum_{f_k\in C_{i-k,k}}f_k(\delta(s)) x^{i-k}\right) x^j \notag \\ &\ \ + \sigma(r)\widetilde{\delta}(x^i) s x^j + \delta(r) \left(\sum_{k=0}^{i}\sum_{f_k\in C_{i-k,k}}f_k(s) x^{i-k}\right)x^j \notag \\
    = &\ \sum_{k=0}^{i}\sum_{f_k\in C_{i-k,k}}\left(\sigma(r)f_k(\sigma(s)) x^{i-k}\widetilde{\delta}(x^j) + \sigma(r)f_k(\delta(s)) x^{i-k+j} \right. \\
    &\ + \left. \delta(r)f_k(s) x^{i-k+j}\right) + \sigma(r)\widetilde{\delta}(x^i)s x^j. \label{Second24}
\end{align}
}}

If we compare (\ref{First24}) and (\ref{Second24}), we get that
\begin{align*}
&\ \sum_{k=0}^{i}\sum_{f_k\in C_{i-k,k}}\left((\sigma(f_k(s))-f_k(\sigma(s))) x^{i-k}\widetilde{\delta}(x^j) + (\delta(f_k(s))-f_k(\delta(s))) x^{i-k+j} \right. \\
&\ \ + \left. \sigma(f_k(s))\widetilde{\delta}(x^{i-k}) x^j\right) - \widetilde{\delta}(x^i)s x^j=0.
\end{align*}

Let $\widetilde{\delta}(x^i) = \sum\limits_{l=0}^{n} a_lx^l$, $\widetilde{\delta}(x^j) = \sum\limits_{l=0}^{n}b_l x^l$ and $\widetilde{\delta}(x^{i-k}) = \sum\limits_{l=0}^{n}c_l x^l$. Then,
\begin{align*}
&\ \notag \sum_{k=0}^{i}\sum_{f_k\in C_{i-k,k}}\left((\sigma(f_k(s))-f_k(\sigma(s))) x^{i-k}\sum_{l=0}^{n}b_l x^l+(\delta(f_k(s))-f_k(\delta(s))) x^{i-k+j} \right. \\ \notag
&\ \ \left. + \ \sigma(f_k(s))\sum_{l=0}^{n}c_l x^l x^j\right) - \sum_{l=0}^{n}a_l x^ls x^j=0. \notag
\end{align*}

\begin{align*}
&\ \sum_{k=0}^{i}\sum_{f_k\in C_{i-k,k}}\left((\sigma(f_k(s))-f_k(\sigma(s)))\sum_{l=0}^{n}\left(\sum_{p=0}^{i-k}\sum_{h_p\in C_{i-k-p,p}}h_p(b_l) x^{i-k-p}\right) x^l \right. \\ \notag
&\ \ \left. + \ (\delta(f_k(s))-f_k(\delta(s))) x^{i-k+j} +\sigma(f_k(s))\sum_{l=0}^{n}c_l x^l x^j\right)\\ \notag
&\ -\sum_{l=0}^{n}a_l\left(\sum_{k=0}^l\sum_{g_k\in C_{l-k,k}}g_k(s) x^{l-k}\right) x^j = 0. 
\end{align*}

\begin{align}\label{Eq1}
&\ \sum_{k=0}^{i}\sum_{f_k\in C_{i-k,k}}\left((\sigma(f_k(s))-f_k(\sigma(s)))\sum_{l=0}^{n}\sum_{p=0}^{i-k}\sum_{h_p\in C_{i-k-p,p}}h_p(b_l)x^{i-k-p+l} \right. \\ \notag
&\ \ \left. + \ (\delta(f_k(s))-f_k(\delta(s)))x^{i-k+j} +\sigma(f_k(s))\sum_{l=0}^{n}c_l x^{l+j}\right)\\ \notag
&\ \ - \ \sum_{l=0}^{n}\sum_{k=0}^l\sum_{g_k\in C_{l-k,k}}a_lg_k(s)x^{l-k+j} = 0. 
\end{align}

When $k=0$, we put attention on the monomial $\sum_{f_0\in C_{i,0}}(\delta(f_0(s))-f_0(\delta(s)))x^{i+j}$. This is the monomial of the highest degree and since it is equal to zero, it follows that $\sum_{f_0\in C_{i,0}} \left(\delta(f_0(s))-f_0(\delta(s))\right) = 0$. Since the power $i$ is arbitrary, it can be seen that $\delta(\sigma(s))=\sigma(\delta(s))$. In the same way, the value of $s$ is arbitrary, which means that $\delta\sigma = \sigma\delta$.

Now, expression {\rm (}\ref{Eq1}{\rm )} can be written as
\begin{align*}
\sum_{k=0}^{i}\sum_{f_k\in C_{i-k,k}}\left(\sigma(f_k(s))\sum_{l=0}^{n}c_lt^{l+j}\right)-\sum_{l=0}^{n}\sum_{k=0}^l\sum_{g_k\in C_{l-k,k}}a_lg_k(s)x^{l-k+j} = &\ 0,
\end{align*}

or equivalently, 
\begin{align*}
\sum_{l=0}^{n}\left(\sum_{k=0}^{i}\sum_{f_k\in C_{i-k,k}}\sigma(f_k(s))c_lx^{l}-\sum_{k=0}^l\sum_{g_k\in C_{l-k,k}}a_lg_k(s)x^{l-k}\right) = &\ 0.
\end{align*}

Since this equality holds for all $s\in R$, any automorphism $\sigma$ and every $\sigma$-derivation $\delta$ of $R$, necessarily $a_l=c_l=0$ for all $0\leq l \leq n$. By changing the values of $i$ with $j$ we obtain that $b_l=0$. Then $\widetilde{\delta}(x)=0$. This means that 
$\widetilde{\delta}(f(x))=\widetilde{\delta}(a_0+a_1x+ \cdots +a_nx^n)=\delta(a_0)+\delta(a_1)x+ \cdots + \delta(a_n)x^n$.
\end{proof}

\begin{lemma}\label{commrule2}
Let $A = \sigma(R)\langle x_1, x_2 \rangle$ be a SPBW extension over $R$ such that $\sigma_i$ commutes with $\delta_i$ for $i = 1, 2$. Then for any $m \in \mathbb{N}$ we have that
\begin{equation*}
    x_i^mr = \sum_{k=0}^{m}\binom{m}{k}\sigma_i^{m-k}\circ\delta_i^{k}(r) x_i^{m-k}, \quad {\rm for} \ i = 1, 2.
\end{equation*}
\end{lemma}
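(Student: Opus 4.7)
The plan is to fix $i\in\{1,2\}$ and reduce the statement to the one-generator situation already handled in Lemma \ref{commrule1}. Since $r\in R$, the only commutation that ever enters the calculation of $x_i^m r$ is the rule between $x_i$ and elements of $R$, which by Proposition \ref{GallegoLezama2011Proposition3} reads $x_i r = \sigma_i(r)x_i + \delta_i(r)$. This is formally the defining relation of the Ore extension $R[x_i;\sigma_i,\delta_i]$; in particular, the second indeterminate $x_j$ never appears in any intermediate expression obtained by pushing $x_i$ past a coefficient in $R$. So the presence of the extra generator is immaterial for the identity under consideration.

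The argument I would run is an induction on $m$ that mirrors the proof of Lemma \ref{commrule1}. The cases $m=0$ and $m=1$ are immediate. For the inductive step, I multiply the identity for $m$ on the left by $x_i$, apply $x_i r' = \sigma_i(r')x_i + \delta_i(r')$ term by term to move $x_i$ past each coefficient $\sigma_i^{m-k}\circ\delta_i^k(r)$, and then collect like powers of $x_i$. The hypothesis $\sigma_i\circ\delta_i=\delta_i\circ\sigma_i$ ensures that every composition of $m+1-k$ copies of $\sigma_i$ and $k$ copies of $\delta_i$ collapses to the single operator $\sigma_i^{m+1-k}\circ\delta_i^k$, and the Pascal identity $\binom{m}{k}+\binom{m}{k-1}=\binom{m+1}{k}$ reassembles the binomial coefficients in the required form.

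There is no substantive obstacle here; the proof is routine bookkeeping once the observation about independence from $x_j$ is made. In fact the lemma can be viewed simply as a corollary of Lemma \ref{commrule1} applied separately for $i=1$ and $i=2$, since the restriction of the SPBW multiplication to the subset $R\cup\{x_i^k\}_{k\in\mathbb{N}}$ depends only on the pair $(\sigma_i,\delta_i)$.
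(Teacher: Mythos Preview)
Your proposal is correct and follows essentially the same approach as the paper: the paper's proof simply remarks that the argument is completely analogous to Lemma~\ref{commrule1}, carried out by induction separately for $x_1$ and for $x_2$, using that $\sigma_i$ commutes with $\delta_i$. Your additional observation that the second indeterminate never enters the computation of $x_i^m r$ makes explicit why this reduction is legitimate, but the underlying method is the same.
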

\begin{proof}
This result can be proven in a completely analogous way to the Lemma \ref{commrule1}; it is enough to apply induction on $x_1$ and then on $x_2$. It must also be taken into account that $\sigma_i$ commutes with $\delta_i$ for $i = 1, 2$.
\end{proof}

\begin{proposition}\label{extaautoSPBW2}
Let $A=\sigma(R)\langle x_1, x_2 \rangle$ be a skew PBW extension over $R$ of automorphism type. Consider the endomorphisms $\widetilde{\sigma_i}:A \rightarrow A$ and $\widetilde{\delta_i}:A\rightarrow A$ defined as
\begin{align*}
    \widetilde{\sigma_i}\left(\sum r_kx_1^{\alpha_{1,k}}x_2^{\alpha_{2,k}}\right) &\ = \sum \sigma_i(r_k)x_1^{\alpha_{1,k}}x_2^{\alpha_{2,k}}, \\ \widetilde{\delta_i}\left(\sum r_kx_1^{\alpha_{1,k}}x_2^{\alpha_{2,k}}\right) &\ = \sum \delta_i(r_k)x_1^{\alpha_{1,k}}x_2^{\alpha_{2,k}},
\end{align*}
for $i= 1, 2$, and such that $\sigma_i$ and $\delta_i$ commute, $\delta_i\circ\delta_j=\delta_j\circ\delta_i$, $\delta_i\circ\sigma_j=\sigma_j\circ\delta_i$ and $\delta_k(d_{i,j})=\delta_k(r_l^{(i,j)})=0$ for $i, j, k, l= 1, 2$. Then $\widetilde{\sigma}$ is an automorphism of $A$ and $\widetilde{\delta}$ is a $\widetilde{\sigma_i}$-derivation of $A$.
\end{proposition}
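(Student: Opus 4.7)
The plan is to model the argument on Proposition~\ref{extaautoSPBW1}, splitting the verification into two independent parts: first, that each $\widetilde{\sigma_i}$ is an algebra automorphism of $A$; second, that each $\widetilde{\delta_i}$ is a $\widetilde{\sigma_i}$-derivation. Because $\widetilde{\sigma_i}$ and $\widetilde{\delta_i}$ are already defined to be additive via their action on coefficients along the PBW basis $\operatorname{Mon}(A)=\{x_1^{\alpha_1}x_2^{\alpha_2}\}$, additivity in each case is automatic and the whole task reduces to checking the multiplicative or Leibniz identity on pairs of standard monomials.

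For $\widetilde{\sigma_i}$, I would first test the identity $\widetilde{\sigma_i}(x_j r)=\widetilde{\sigma_i}(x_j)\,\widetilde{\sigma_i}(r)$ for $r\in R$ and $j=1,2$; expanding $x_jr=\sigma_j(r)x_j+\delta_j(r)$ and comparing forces the compatibilities $\sigma_i\circ\sigma_j=\sigma_j\circ\sigma_i$ and $\sigma_i\circ\delta_j=\delta_j\circ\sigma_i$, which are either among the stated hypotheses or implicit in the automorphism-type assumption together with the commutation of each $\sigma_i$ with the $\delta_j$'s. Next, applying $\widetilde{\sigma_i}$ to both sides of the SPBW relation
\[
x_2x_1=d_{1,2}x_1x_2+r_0^{(1,2)}+r_1^{(1,2)}x_1+r_2^{(1,2)}x_2
\]
must agree with $\widetilde{\sigma_i}(x_2)\widetilde{\sigma_i}(x_1)=x_2x_1$, so I would verify that each $\sigma_i$ fixes the structure constants, a fact encoded in the same assumption. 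Induction on monomial degrees extends multiplicativity to all of $A$, and bijectivity follows by constructing $\widetilde{\sigma_i^{-1}}$ from $\sigma_i^{-1}$ through the same recipe.

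For $\widetilde{\delta_i}$, I would take two arbitrary standard monomials $p=r\,x_1^{\alpha_1}x_2^{\alpha_2}$ and $q=s\,x_1^{\beta_1}x_2^{\beta_2}$ and verify the twisted Leibniz identity $\widetilde{\delta_i}(pq)=\widetilde{\sigma_i}(p)\,\widetilde{\delta_i}(q)+\widetilde{\delta_i}(p)\,q$. Using Lemma~\ref{commrule2} to push $x_1^{\alpha_1}x_2^{\alpha_2}$ past $s$ and thereby bring $pq$ into standard form, I would apply $\widetilde{\delta_i}$ coefficient-wise and, in parallel, expand the right-hand side by the same moves. The hypotheses $\delta_i\circ\delta_j=\delta_j\circ\delta_i$, $\delta_i\circ\sigma_j=\sigma_j\circ\delta_i$, and the commutation of each $\sigma_i$ with $\delta_i$ ensure that the iterated compositions produced by Lemma~\ref{commrule2} match up on both sides, and the final bookkeeping reduces term by term to the monomial comparison already carried out in Proposition~\ref{extaautoSPBW1}.

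The hard part will be the interaction between $\widetilde{\delta_i}$ and the PBW normalization. Whenever $x_2x_1$ is rewritten as $d_{1,2}x_1x_2+r_0^{(1,2)}+r_1^{(1,2)}x_1+r_2^{(1,2)}x_2$, new terms with coefficients $d_{1,2}$ and $r_l^{(1,2)}$ enter the expansion, and the Leibniz rule survives on the normalized expression only if $\widetilde{\delta_i}$ annihilates these extra coefficients. This is precisely where the assumption $\delta_k(d_{i,j})=\delta_k(r_l^{(i,j)})=0$ is used: it guarantees that normalization and $\widetilde{\delta_i}$ commute, so the lengthy identity between standard monomials produced by Lemma~\ref{commrule2} collapses just as cleanly as in the single-generator case, and the $n=2$ statement follows without further subtlety.
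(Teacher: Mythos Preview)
Your approach is essentially the paper's: both reduce the $\widetilde{\delta_i}$-Leibniz identity to pairs of standard monomials $p=r\,x_1^{\alpha_1}x_2^{\alpha_2}$ and $q=s\,x_1^{\beta_1}x_2^{\beta_2}$, use Lemma~\ref{commrule2} to push $x_1^{\alpha_1}x_2^{\alpha_2}$ past $s$, expand $\widetilde{\delta_i}(pq)$ and $\widetilde{\sigma_i}(p)\widetilde{\delta_i}(q)+\widetilde{\delta_i}(p)q$ in parallel, and then invoke the commutation hypotheses on the $\sigma$'s and $\delta$'s together with $\delta_k(d_{i,j})=\delta_k(r_l^{(i,j)})=0$ to absorb the extra terms produced when $x_2^{\alpha_2-k_2}x_1^{\beta_1}$ is normalized. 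One caution: in your $\widetilde{\sigma_i}$ paragraph you appeal to $\sigma_i\circ\sigma_j=\sigma_j\circ\sigma_i$ and to $\sigma_i$ fixing the structure constants $d_{1,2},r_l^{(1,2)}$, but neither condition is actually among the listed hypotheses (which constrain only the $\delta$'s); the paper glosses over this point entirely and simply asserts that bijectivity of $\widetilde{\sigma_i}$ is inherited from $\sigma_i$, so your outline is in fact more scrupulous than the original here.
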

\begin{proof}
Let $f = \displaystyle\sum_{k=0}^m r_kx_1^{\alpha_{1,k}} x_2^{\alpha_{2,k}}$, where $r_k\in R$ and $\alpha_{1,k}, \alpha_{2,k}\in\mathbb{N}$, for $1\leq k \leq m$. Then
\begin{align*}
   \widetilde{\sigma_i}(f) = \widetilde{\sigma_i}\left(\displaystyle\sum_{k=0}^m r_k x_1^{\alpha_{1,k}} x_2^{\alpha_{2,k}}\right) = \displaystyle\sum_{k=0}^m \widetilde{\sigma_i}\left(r_k x_1^{\alpha_{1,k}} x_2^{\alpha_{2,k}}\right) = \displaystyle\sum_{k=0}^m \widetilde{\sigma_i}\left(r_k\right) x_1^{\alpha_{1,k}} x_2^{\alpha_{2,k}}.
\end{align*}

The bijectivity of $\widetilde{\sigma_i}$ is inherited by $\sigma_i$ for $i=1, 2$.

Now, for $p=rx_1^{\alpha_{1}}x_2^{\alpha_{2}}$, we have that
\begin{align*}
     \widetilde{\delta_i}(p)&\ =\widetilde{\delta_i}(rx_1^{\alpha_{1}}x_2^{\alpha_{2}})=\widetilde{\sigma_i}(r)\widetilde{\delta_i}(x_1^{\alpha_{1}}x_2^{\alpha_{2}})+\widetilde{\delta_i}(r)x_1^{\alpha_{1}}x_2^{\alpha_{2}} \\
     &\ = \sigma_i(r)\left(\widetilde{\delta_i}(x_1^{\alpha_{1}})x_2^{\alpha_{2}}+ x_1^{\alpha_{1}}\widetilde{\delta_i}(x_2^{\alpha_{2}})\right)+\delta_i(r)x_1^{\alpha_{1}}x_2^{\alpha_{2}} \\
     &\ = \delta_i(r)x_1^{\alpha_{1}}x_2^{\alpha_{2}}.
\end{align*}

Consider $p=rx_1^{\alpha_{1}}x_2^{\alpha_{2}}$ and $s=sx_1^{\beta_{1}}x_2^{\beta_{2}}$ for some $\alpha_{1},\alpha_{2}, \beta_{1}, \beta_{2} \in \mathbb{N}$. Then,
{\footnotesize{
    \begin{align}
    \widetilde{\delta}_i(ps)= &\ \widetilde{\delta}(rx_1^{\alpha_{1}}x_2^{\alpha_{2}}sx_1^{\beta_{1}}x_2^{\beta_{2}}) \notag \\
    = &\ \widetilde{\delta}_i\left(rx_1^{\alpha_{1}}\sum_{k_2=0}^{\alpha_{2}}\binom{\alpha_{2}}{k_2}\sigma_2^{\alpha_{2}-k_2}\circ\delta_2^{k_2}(s)x_2^{\alpha_{2}-k_2}x_1^{\beta_{1}}x_2^{\beta_{2}}\right) \notag \\
     = &\ \widetilde{\delta}_i\left(\sum_{k_2=0}^{\alpha_{2}}\binom{\alpha_{2}}{k_2}rx_1^{\alpha_{1}}\sigma_2^{\alpha_{2}-k_2}\circ\delta_2^{k_2}(s)x_2^{\alpha_{2}-k_2}x_1^{\beta_{1}}x_2^{\beta_{2}} \right) \notag \\
     = &\ \widetilde{\delta}_i\left(\sum_{k_2=0}^{\alpha_{2}}\binom{\alpha_{2}}{k_2}r\sum_{k_1=0}^{\alpha_{1}}\binom{\alpha_{1}}{k_1}\sigma_1^{\alpha_{1}-k_1}\circ\delta_1^{k_1}\left(\sigma_2^{\alpha_{2}-k_2}\circ\delta_2^{k_2}(s)\right)x_1^{\alpha_{1}-k_1}x_2^{\alpha_{2}-k_2}x_1^{\beta_{1}}x_2^{\beta_{2}} \right) \notag \\
     = &\ \sum_{k_1=0}^{\alpha_{1}}\sum_{k_2=0}^{\alpha_{2}}\dbinom{\alpha_{1}}{k_1}\dbinom{\alpha_{2}}{k_2}\delta_i\left(r\sigma_1^{\alpha_{1}-k_1}\circ\delta_1^{k_1}\left(\sigma_2^{\alpha_{2}-k_2}\circ\delta_2^{k_2}(s)\right)\right)x_1^{\alpha_{1}-k_1}x_2^{\alpha_{2}-k_2}x_1^{\beta_{1}}x_2^{\beta_{2}} \label{reldelta12}.
    \end{align}
}}

On the other hand,
{\footnotesize{
    \begin{align}
\widetilde{\sigma}_i(p) & \widetilde{\delta}_i(s) + \widetilde{\delta}_i(p)s =  \widetilde{\sigma}_i(rx_1^{\alpha_{1}}x_2^{\alpha_{2}})\widetilde{\delta}_i(sx_1^{\beta_{1}}x_2^{\beta_{2}})+\widetilde{\delta}_i(rx_1^{\alpha_{1}}x_2^{\alpha_{2}})sx_1^{\beta_{1}}x_2^{\beta_{2}} \notag \\
    = &\ \sigma_i(r)x_1^{\alpha_{1}}x_2^{\alpha_{2}}\delta_i(s)x_1^{\beta_{1}}x_2^{\beta_{2}}+\delta_i(r)x_1^{\alpha_{1}}x_2^{\alpha_{2}}sx_1^{\beta_{1}}x_2^{\beta_{2}} \notag \\
    = &\ \sum_{k_1=0}^{\alpha_{1}}\sum_{k_2=0}^{\alpha_{2}}\binom{\alpha_{1}}{k_1}\binom{\alpha_{2}}{k_2}\sigma_i(r)\sigma_1^{\alpha_{1}-k_1}\circ\delta_1^{k_1}\left(\sigma_2^{\alpha_{2}-k_2}\circ\delta_2^{k_2}(\delta_i(s))\right)x_1^{\alpha_{1}-k_1}x_2^{\alpha_{2}-k_2}x_1^{\beta_{1}}x_2^{\beta_{2}} \notag \\
    &\ + \sum_{k_1=0}^{\alpha_{1}}\sum_{k_2=0}^{\alpha_{2}}\binom{\alpha_{1}}{k_1}\binom{\alpha_{2}}{k_2}\delta_i(r)\sigma_1^{\alpha_{1}-k_1}\circ\delta_1^{k_1}\left(\sigma_2^{\alpha_{2}-k_2}\circ\delta_2^{k_2}(s)\right)x_1^{\alpha_{1}-k_1}x_2^{\alpha_{2}-k_2}x_1^{\beta_{1}}x_2^{\beta_{2}} \label{reldelta22}.
    \end{align}
}}

We focus on the monomial $x_2^{\alpha_2-k_2}x_1^{\beta_1}$. With the aim of comparing the terms, necessarily the commutation rule of the ring must be applied: there, the elements $d_{1,2}$,  $r_k^{(1,2)}$, $0 \leq k \leq 2$, appear. As expected, these must be eliminated except $r_0^{(1,2)}$. For this reason, we assume that $\delta_k(d_{i,j})=\delta_k(r_l^{(i,j)})=0$ for $i, j, k, l= 1, 2$. If we compare {\rm (}\ref{reldelta12}{\rm )} with {\rm (}\ref{reldelta22}{\rm )}, then we get that 
\begin{align*}
    &\ \sigma_i(r)\delta_i\left(\sigma_1^{\alpha_{1}-k_1}\circ\delta_1^{k_1}\left(\sigma_2^{\alpha_{2}-k_2}\circ\delta_2^{k_2}(s)\right)\right)+\delta_i(r)\sigma_1^{\alpha_{1}-k_1}\circ\delta_1^{k_1}\left(\sigma_2^{\alpha_{2}-k_2}\circ\delta_2^{k_2}(s)\right) \\
    &\ = \sigma_i(r)\sigma_1^{\alpha_{1}-k_1}\circ\delta_1^{k_1}\left(\sigma_2^{\alpha_{2}-k_2}\circ\delta_2^{k_2}(\delta_i(s))\right) + \delta_i(r)\sigma_1^{\alpha_{1}-k_1}\circ\delta_1^{k_1}\left(\sigma_2^{\alpha_{2}-k_2}\circ\delta_2^{k_2}(s)\right) \\
\end{align*}
\begin{align*}
    &\ \sigma_i(r)\left(\delta_i\left(\sigma_1^{\alpha_{1}-k_1}\circ\delta_1^{k_1}\left(\sigma_2^{\alpha_{2}-k_2}\circ\delta_2^{k_2}(s)\right)\right)-\sigma_1^{\alpha_{1}-k_1}\circ\delta_1^{k_1}\left(\sigma_2^{\alpha_{2}-k_2}\circ\delta_2^{k_2}(\delta_i(s))\right)\right) \\
    &\ = 0.
\end{align*}

Since $\delta_i$ commute with $\sigma_j$ and $\delta_j$, for $j=1, 2$, this last equality holds, whence $\widetilde{\delta}_i$ is a $\widetilde{\sigma}_i$-derivation.
\end{proof}

\begin{lemma}
Let $A=\sigma(R)\langle x_1, \dotsc, x_n \rangle$ be a skew PBW extension over $R$ such that $\sigma_i$ commutes with $\delta_i$, $1\leq i \leq n$ for all $i$. Then, for any $m \in \mathbb{N}$ we have that
\begin{equation*}
x_i^mr=\sum_{k=0}^{m}\binom{m}{k}\sigma_i^{m-k}\circ\delta_i^{k}(r)x_i^{m-k}, \quad {\rm for\ all}\ 1\leq i\leq n.
\end{equation*}
\end{lemma}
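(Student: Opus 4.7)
The plan is to fix an arbitrary index $i\in\{1,\dotsc,n\}$ and prove the formula by induction on $m\in\mathbb{N}$, in complete analogy with the second part of Lemma \ref{commrule1}. Since the claim involves only the single generator $x_i$ together with its associated endomorphism $\sigma_i$ and $\sigma_i$-derivation $\delta_i$, the other generators $x_j$ and the relations of type $x_jx_i - d_{i,j}x_ix_j = r_0^{(i,j)} + \sum_k r_k^{(i,j)}x_k$ play no role in the calculation; effectively the argument reduces to the Ore-extension situation, where only the defining rule $x_ir = \sigma_i(r)x_i + \delta_i(r)$ from Proposition \ref{GallegoLezama2011Proposition3} is needed.

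For the base case $m=1$, Proposition \ref{GallegoLezama2011Proposition3} gives directly $x_ir = \sigma_i(r)x_i + \delta_i(r) = \binom{1}{0}\sigma_i(r)x_i + \binom{1}{1}\delta_i(r)$, which is the right-hand side at $m=1$. For the inductive step, assuming the formula at level $m$, I would expand
\begin{equation*}
x_i^{m+1}r = x_i\bigl(x_i^m r\bigr) = \sum_{k=0}^{m}\binom{m}{k}\, x_i\bigl(\sigma_i^{m-k}\circ\delta_i^{k}(r)\bigr)\,x_i^{m-k},
\end{equation*}
and then push the leading $x_i$ past each coefficient using Proposition \ref{GallegoLezama2011Proposition3} to obtain
\begin{equation*}
\sum_{k=0}^{m}\binom{m}{k}\Bigl(\sigma_i\bigl(\sigma_i^{m-k}\circ\delta_i^{k}(r)\bigr)x_i^{m+1-k} + \delta_i\bigl(\sigma_i^{m-k}\circ\delta_i^{k}(r)\bigr)x_i^{m-k}\Bigr).
\end{equation*}
Here the hypothesis $\sigma_i\circ\delta_i = \delta_i\circ\sigma_i$ is precisely what allows the clean identifications $\sigma_i\circ\sigma_i^{m-k}\circ\delta_i^{k} = \sigma_i^{m+1-k}\circ\delta_i^{k}$ and $\delta_i\circ\sigma_i^{m-k}\circ\delta_i^{k} = \sigma_i^{m-k}\circ\delta_i^{k+1}$.

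The remaining step is a purely combinatorial reindexing: shift $k\mapsto k-1$ in the second summand and collect the coefficient of $\sigma_i^{m+1-k}\circ\delta_i^{k}(r)\,x_i^{m+1-k}$, which by Pascal's identity equals $\binom{m}{k} + \binom{m}{k-1} = \binom{m+1}{k}$, giving the formula at level $m+1$. I do not foresee any genuine obstacle: the argument is routine bookkeeping, structurally identical to the commutative case of Lemma \ref{commrule1}, and the commutativity hypothesis $\sigma_i\circ\delta_i = \delta_i\circ\sigma_i$ is exactly what is needed to turn the composite $f_k\in C_{n-k,k}$ into the single operator $\sigma_i^{m-k}\circ\delta_i^{k}$ counted with multiplicity $\binom{m}{k}$. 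The multivariable nature of $A$ is irrelevant because the formula is intrinsic to a single generator acting on scalars.
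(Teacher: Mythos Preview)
Your proposal is correct and follows essentially the same approach as the paper: the paper's proof simply states that the argument is done by induction on each $x_i$, in complete analogy with Lemmas \ref{commrule1} and \ref{commrule2}, which is precisely what you have spelled out in detail. Your explicit handling of the inductive step via Pascal's identity and your observation that the other generators play no role merely flesh out what the paper leaves implicit.
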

\begin{proof}
The proof of this lemma, similar to the previous Lemmas \ref{commrule1} and \ref{commrule2}, is done by induction on each $x_i$, $1\leq i \leq n$. 
\end{proof}

\begin{proposition}\label{extaautoSPBWn}
Let $A=\sigma(R)\langle x_1,\ldots, x_n \rangle$ be a skew PBW extension over $R$ of automorphism type. Consider the endomorphisms $\widetilde{\sigma_i}:A \rightarrow A$ and $\widetilde{\delta_i}:A\rightarrow A$ defined as
\begin{align*}
    \widetilde{\sigma_i}\left(\sum r_kx_1^{\alpha_{1,k}}\cdots x_n^{\alpha_{n,k}}\right) &\ = \sum \sigma_i(r_k)x_1^{\alpha_{1,k}}\cdots x_n^{\alpha_{n,k}} \\
    \widetilde{\delta_i}\left(\sum r_kx_1^{\alpha_{1,k}}\cdots x_n^{\alpha_{n,k}}\right) &\ = \sum \delta_i(r_k)x_1^{\alpha_{1,k}}\cdots x_n^{\alpha_{n,k}},
\end{align*}
 for $1 \leq i \leq n$, and such that $\sigma_i$ and $\delta_i$ commute, $\delta_i\circ\delta_j=\delta_j\circ\delta_i$,  $\delta_i\circ\sigma_j=\sigma_j\circ\delta_i$ and $\delta_k(d_{i,j})=\delta_k(r_l^{(i,j)})=0$, for $1 \leq i, j, k, l \leq n$. Then $\widetilde{\sigma}$ is an automorphism of $A$ and $\widetilde{\delta}$ is a $\widetilde{\sigma_i}$-derivation of $A$.
\end{proposition}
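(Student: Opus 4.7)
The plan is to follow verbatim the strategy of Proposition~\ref{extaautoSPBW2}, with the preceding commutation lemma as the main computational tool. The statement decomposes into two claims: that each $\widetilde{\sigma_i}$ is an algebra automorphism of $A$, and that each $\widetilde{\delta_i}$ is a $\widetilde{\sigma_i}$-derivation. Because both maps are $R$-linear combinations on the free left $R$-basis $\operatorname{Mon}(A)$, it suffices to verify the relevant identities on standard monomials $r\,x_1^{\alpha_1}\cdots x_n^{\alpha_n}$ and then extend by linearity.

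For the automorphism part, I would argue exactly as in the two-generator case: $\widetilde{\sigma_i}$ fixes $\operatorname{Mon}(A)$ and acts on coefficients via $\sigma_i$, so bijectivity is inherited from bijectivity of $\sigma_i$ on $R$; multiplicativity on monomials follows from Proposition~\ref{GallegoLezama2011Proposition3} together with the fact that the structure constants $d_{i,j}$ and $r_l^{(i,j)}$ are preserved under $\sigma_k$. For the derivation part, I would take two standard monomials $p = r\,x_1^{\alpha_1}\cdots x_n^{\alpha_n}$ and $s = s\,x_1^{\beta_1}\cdots x_n^{\beta_n}$, and use the commutation lemma to push $s$ past each $x_j^{\alpha_j}$ successively. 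This yields
\[
\widetilde{\delta_i}(ps) = \sum_{k_1=0}^{\alpha_1}\cdots\sum_{k_n=0}^{\alpha_n} \binom{\alpha_1}{k_1}\cdots\binom{\alpha_n}{k_n}\, \delta_i\bigl(r\,F_{\alpha,k}(s)\bigr)\, x_1^{\alpha_1-k_1}\cdots x_n^{\alpha_n-k_n}\, x_1^{\beta_1}\cdots x_n^{\beta_n},
\]
where $F_{\alpha,k} = \sigma_1^{\alpha_1-k_1}\!\circ\delta_1^{k_1}\!\circ\cdots\circ\sigma_n^{\alpha_n-k_n}\!\circ\delta_n^{k_n}$. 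Expanding $\widetilde{\sigma_i}(p)\widetilde{\delta_i}(s) + \widetilde{\delta_i}(p)s$ by the same commutation procedure yields an analogous sum, and the two expressions agree coefficient-by-coefficient precisely when $\delta_i$ commutes with every $F_{\alpha,k}$. The hypotheses $\delta_i\circ\delta_j = \delta_j\circ\delta_i$ and $\delta_i\circ\sigma_j = \sigma_j\circ\delta_i$ deliver exactly this commutation, so the Leibniz identity reduces to the corresponding identity for $\delta_i$ on $R$, which holds by assumption.

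The main obstacle I anticipate is bookkeeping rather than conceptual novelty. When the intermediate factors $x_j^{\alpha_j-k_j}$ must be commuted past earlier generators $x_l^{\beta_l}$ with $l<j$ in order to bring the product into standard form, one necessarily invokes the SPBW relations $x_jx_l - d_{l,j}x_lx_j = r_0^{(l,j)} + \sum_m r_m^{(l,j)}x_m$ from Definition~\ref{defpbwextension}. Without the hypothesis $\delta_k(d_{i,j}) = \delta_k(r_l^{(i,j)}) = 0$, the straightening process would generate terms on which $\widetilde{\delta_i}$ acts nontrivially on the structure constants, producing spurious contributions that break the Leibniz rule. With the hypothesis in force, these contributions vanish and the identity $\widetilde{\delta_i}(ps) = \widetilde{\sigma_i}(p)\widetilde{\delta_i}(s) + \widetilde{\delta_i}(p)s$ follows. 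An alternative, perhaps cleaner, route is to run induction on $n$: treat the sub-SPBW extension $\sigma(R)\langle x_1,\ldots,x_{n-1}\rangle$ as a new coefficient ring, apply the inductive hypothesis to produce $\widetilde{\sigma_i}$ and $\widetilde{\delta_i}$ there, and then adjoin $x_n$ via Proposition~\ref{extaautoSPBW1}; the compatibility hypotheses are exactly what is needed for the one-variable step to apply at each layer.
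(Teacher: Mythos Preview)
Your proposal is correct and takes essentially the same route as the paper's proof: reduce to standard monomials, expand $ps$ via the commutation lemma into a nested sum indexed by $(k_1,\ldots,k_n)$ with coefficients $\sigma_1^{\alpha_1-k_1}\circ\delta_1^{k_1}\circ\cdots\circ\sigma_n^{\alpha_n-k_n}\circ\delta_n^{k_n}(s)$, compare $\widetilde{\delta_i}(ps)$ with $\widetilde{\sigma_i}(p)\widetilde{\delta_i}(s)+\widetilde{\delta_i}(p)s$ term by term, and invoke the hypotheses $\delta_i\circ\sigma_j=\sigma_j\circ\delta_i$, $\delta_i\circ\delta_j=\delta_j\circ\delta_i$ together with $\delta_k(d_{i,j})=\delta_k(r_l^{(i,j)})=0$ to make the two expressions agree. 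The inductive alternative you sketch at the end does not appear in the paper but is a legitimate repackaging of the same computation.
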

\begin{proof}
Let $f = \displaystyle\sum_{k=0}^mc_kx_1^{\alpha_{1,k}}\cdots x_n^{\alpha_{n,k}}$, where $\alpha_{1,k}, \ldots, \alpha_{n,k}\in\mathbb{N}$, for $1\leq k \leq m$. Then
\begin{align*}
   \widetilde{\sigma_i}(a) &\ =\widetilde{\sigma_i}\left(\displaystyle\sum_{k=0}^mc_kx_1^{\alpha_{1,k}}\cdots x_n^{\alpha_{n,k}}\right) \\
   &\ =\displaystyle\sum_{k=0}^m\widetilde{\sigma_i}\left(c_kx_1^{\alpha_{1,k}}\cdots x_n^{\alpha_{n,k}}\right) \\
   &\ =\displaystyle\sum_{k=0}^m\widetilde{\sigma_i}\left(c_k\right)x_1^{\alpha_{1,k}}\cdots x_n^{\alpha_{n,k}}. 
\end{align*}

With the above, the bijectivity of $\widetilde{\sigma_i}$ is inherited by $\sigma_i$, for $1\leq i \leq n$.

For $p=rx_1^{\alpha_{1}}\cdots x_n^{\alpha_{n}}$, we have that
\begin{align*}
     \widetilde{\delta_i}(p)&\ =\widetilde{\delta_i}(rx_1^{\alpha_{1}}\cdots x_n^{\alpha_{n}}) \\ 
     &\ =\widetilde{\sigma_i}(r)\widetilde{\delta_i}(x_1^{\alpha_{1}}\cdots x_n^{\alpha_{n}})+\widetilde{\delta_i}(r)x_1^{\alpha_{1}}\cdots x_n^{\alpha_{n}} \\
     &\ = \sigma_i(r)\left(\widetilde{\delta_i}(x_1^{\alpha_{1}})x_2^{\alpha_{2}}\cdots x_n^{\alpha_{n}}+\cdots + x_1^{\alpha_{1}}\cdots\widetilde{\delta_i}(x_n^{\alpha_{n}})\right)+\delta_i(r)x_1^{\alpha_{1}}\cdots x_n^{\alpha_{n}} \\
     &\ = \delta_i(r)x_1^{\alpha_{1}}\cdots x_n^{\alpha_{n}}.
\end{align*}

Consider $p=rx_1^{\alpha_{1}}\cdots x_n^{\alpha_{n}}$ and $s=sx_1^{\beta_{1}}\cdots x_n^{\beta_{n}}$ for some $\alpha_{1},\ldots, \alpha_{n}\in \mathbb{N}$ and $\beta_{1}, \ldots, \beta_{n} \in \mathbb{N}$. Then
{\footnotesize{
    \begin{align*}
    \widetilde{\delta}_i(ps) = &\ \widetilde{\delta}(rx_1^{\alpha_{1}}\cdots x_n^{\alpha_{n}}sx_1^{\beta_{1}}\cdots x_n^{\beta_{n}}) \\
    = &\ \widetilde{\delta}_i\left(\sum_{k_1=0}^{\alpha_{1}}\cdots\sum_{k_n=0}^{\alpha_{n}}\left(\prod_{j=1}^{n}\binom{l_j}{k_j}\right)r\sigma_1^{\alpha_{1}-k_1}\circ \delta_1^{\alpha_{1}-k_1}\left(\cdots\sigma_{n}^{\alpha_{n}-k_n}\circ\delta_n^{k_n}(s)\cdots\right)x_1^{\alpha_{1}-k_1} \right. 
    \\
    &\ \left. \cdots \ x_{n-1}^{l_{n-1}-k_{n-1}}x_n^{\alpha_{n}-k_n}x_1^{\beta_{1}}\cdots x_n^{\beta_{n}}\right) \\
     = &\ \sum_{k_1=0}^{\alpha_{1}}\cdots\sum_{k_n=0}^{\alpha_{n}}\left(\prod_{j=1}^{n}\binom{l_j}{k_j}\right)\delta_i\left(r\sigma_1^{\alpha_{1}-k_1}\circ \delta_1^{\alpha_{1}-k_1}\left(\cdots\sigma_{n}^{\alpha_{n}-k_n}\circ\delta_n^{k_n}(s)\cdots\right)\right)x_1^{\alpha_{1}-k_1} \\
     &\ \cdots x_{n-1}^{l_{n-1}-k_{n-1}}x_n^{\alpha_{n}-k_n}x_1^{\beta_{1}}\cdots x_n^{\beta_{n}}. 
    \end{align*}
}}

On the other hand,
{\footnotesize{
    \begin{align*}
    \widetilde{\sigma}_i(p)\widetilde{\delta}_i(s) & + \widetilde{\delta}_i(p)s = \widetilde{\sigma}_i(rx_1^{\alpha_{1}}\cdots x_n^{\alpha_{n}})\widetilde{\delta}_i(sx_1^{\beta_{1}}\cdots x_n^{\beta_{n}})+\widetilde{\delta}_i(rx_1^{\alpha_{1}}\cdots x_n^{\alpha_{n}})sx_1^{\beta_{1}}\cdots x_n^{\beta_{n}} \\
    = &\ \sigma_i(r)x_1^{\alpha_{1}}\cdots x_n^{\alpha_{n}}\delta_i(s)x_1^{\beta_{1}}\cdots x_n^{\beta_{n}}+\delta_i(r)x_1^{\alpha_{1}}\cdots x_n^{\alpha_{n}}sx_1^{\beta_{1}}\cdots x_n^{\beta_{n}} \\
    = &\ \sum_{k_1=0}^{\alpha_{1}}\cdots\sum_{k_n=0}^{\alpha_{n}}\left(\prod_{j=1}^{n}\binom{l_j}{k_j}\right)\sigma_i(r)\sigma_1^{\alpha_{1}-k_1}\circ \delta_1^{\alpha_{1}-k_1}\left(\cdots\sigma_{n}^{\alpha_{n}-k_n}\circ\delta_n^{k_n}(\delta_i(s))\cdots\right) \\
    &\ x_1^{\alpha_{1}-k_1}\cdots x_{n-1}^{l_{n-1}-k_{n-1}}x_n^{\alpha_{n}-k_n}x_1^{\beta_{1}}\cdots x_n^{\beta_{n}} \\
    &\ + \sum_{k_1=0}^{\alpha_{1}}\cdots\sum_{k_n=0}^{\alpha_{n}}\left(\prod_{j=1}^{n}\binom{l_j}{k_j}\right)\delta_i(r)\sigma_1^{\alpha_{1}-k_1}\circ \delta_1^{\alpha_{1}-k_1}\left(\cdots\sigma_{n}^{\alpha_{n}-k_n}\circ\delta_n^{k_n}(s)\cdots\right) \\
    &\ x_1^{\alpha_{1}-k_1}\cdots x_{n-1}^{l_{n-1}-k_{n-1}}x_n^{\alpha_{n}-k_n}x_1^{\beta_{1}}\cdots x_n^{\beta_{n}}.
    \end{align*}
}}

For the same reason as in Proposition \ref{extaautoSPBW2}, when the elements $d_{i,j}$, $r_k^{(i,j)}$, $1\leq i < j \leq n$, $0 \leq k \leq n$, appear, these must cancel out when the terms are equalized, except $r_0^{(i,j)}$, $1\leq i < j \leq n$. For this reason, we assume that $\delta_k(d_{i,j})=\delta_k(r_l^{(i,j)})=0$ for $0\leq i, j, k, l \leq n$. In this way, 
\begin{align*}
    &\ \sigma_i(r)\delta_i\left(\sigma_1^{\alpha_{1}-k_1}\circ \delta_1^{\alpha_{1}-k_1}\left(\cdots\sigma_{n}^{\alpha_{n}-k_n}\circ\delta_n^{k_n}(s)\cdots\right)\right) \\
    &\ \quad  + \delta_i(r)\sigma_1^{\alpha_{1}-k_1}\circ \delta_1^{\alpha_{1}-k_1}\left(\cdots\sigma_{n}^{\alpha_{n}-k_n}\circ\delta_n^{k_n}(s)\cdots\right) \\
    &\ =\sigma_i(r)\sigma_1^{\alpha_{1}-k_1}\circ \delta_1^{\alpha_{1}-k_1}\left(\cdots\sigma_{n}^{\alpha_{n}-k_n}\circ\delta_n^{k_n}(\delta_i(s))\cdots\right) \\
    &\ \quad + \delta_i(r)\sigma_1^{\alpha_{1}-k_1}\circ \delta_1^{\alpha_{1}-k_1}\left(\cdots\sigma_{n}^{\alpha_{n}-k_n}\circ\delta_n^{k_n}(s)\cdots\right) \\
    &\ = \sigma_i(r)\left(\delta_i\left(\sigma_1^{\alpha_{1}-k_1}\circ \delta_1^{\alpha_{1}-k_1}\left(\cdots\sigma_{n}^{\alpha_{n}-k_n}\circ\delta_n^{k_n}(s)\cdots\right)\right) \right. \\
    &\ \quad - \left. \sigma_1^{\alpha_{1}-k_1}\circ \delta_1^{\alpha_{1}-k_1}\left(\cdots\sigma_{n}^{\alpha_{n}-k_n}\circ\delta_n^{k_n}(\delta_i(s))\cdots\right)\right) \\
    &\ =  0.
\end{align*}

By using that $\delta_i$ commute with $\sigma_j$ and $\delta_j$, for $1\leq j \leq n$, the previous equation holds. Therefore, $\widetilde{\delta}_i$ is a $\widetilde{\sigma}_i$-derivation.
\end{proof}

\begin{remark}
    The commutativity of the maps $\sigma$'s and $\delta$'s, that is, of the system of endomorphisms $\Sigma$ and $\Sigma$-derivations $\Delta$ of $R$, as in Proposition \ref{extaautoSPBWn}, was considered by Lezama et al. \cite{LezamaAcostaReyes2015} to characterize prime ideals of SPBW extensions.
\end{remark}

\subsection{Differential smoothness}\label{DifferentialsmoothnesSPBWEAD}

Since we are considering SPBW extensions over any associative and unital ring $R$, we need some natural conditions to compute the Gelfand-Kirillov dimension of a skew PBW extension $A$ over $R$. Having in mind that Lezama and Venegas \cite{LezamaVenegas2020} generalized the classical notion of Gelfand-Kirillov dimension \cite{GelfandKirillov1966, GelfandKirillov1966b} considered by Brzezi\'nski in his definition of differential smoothness (Section \ref{DefinitionsandpreliminariesDSA}), throughout this section we follow this more general setting: we assume that $R$ is an $S$-algebra ($S$ a commutative domain) with a generator frame $V$ and the {\em rank} of $R$ is understood as $\text{rank}(R)=\text{dim}_Q(Q \otimes R) < \infty$ where $Q$ is the field of fractions of $S$ (see \cite[Theorem 2.1]{LezamaVenegas2020} for more details).

The following theorem is the most important result of the paper.

\begin{theorem}\label{smoothSPBWn}
Let $A = \sigma(R)\langle x_1,\ldots, x_n \rangle$ be a bijective skew PBW extension such that $\sigma_i$ and $\delta_i$ are $S$-linear, $\sigma_i(V)\subseteq V$ for all $1\leq i \leq n$, and $d_{i,j}=1$, $r_k^{(i,j)}=0$ for $1\leq k\leq n$, $1\leq i,j, \leq n$ {\rm (}Definition \ref{defpbwextension} {\rm (}iv{\rm )}{\rm )}. Consider $\widetilde{\sigma}_i$ and $\widetilde{\delta}_i$ as in Proposition \ref{extaautoSPBWn} for $1\leq i \leq n$. If $\sigma_i \circ \sigma_j = \sigma_j \circ \sigma_i$ for all $1\leq i < j \leq n$, then $A$ is differentially smooth.
\end{theorem}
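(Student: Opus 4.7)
The plan is to verify the two requirements of Definition \ref{BrzezinskiSitarz2017Definition2.4}: first, that the Gelfand--Kirillov dimension of $A$ is an integer equal to $n$, and second, that $A$ admits a connected integrable differential calculus of that dimension. For the dimension, because $A$ is a left free $R$-module on $\mathrm{Mon}(A) = \{x_1^{\alpha_1}\cdots x_n^{\alpha_n}\}$, and the commutation rules $x_ir = \sigma_i(r)x_i + \delta_i(r)$ together with $x_jx_i - x_ix_j = r_0^{(i,j)} \in R$ do not raise the degree, the filtration by total degree in the $x_i$'s has polynomial growth of degree $n$. Using the Lezama--Venegas extension of the Gelfand--Kirillov dimension and the hypothesis $\mathrm{rank}(R) < \infty$ combined with the $S$-linearity of the $\sigma_i, \delta_i$, one obtains $\mathrm{GKdim}(A) = n$.

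For the differential calculus, I would mimic the construction used by Brzezi\'nski for Ore extensions and by the authors in their companion papers on SPBW extensions. Concretely, define $\Omega^1 A$ to be the free right $A$-module with basis $dx_1,\ldots, dx_n$ and install a left $A$-module structure via the automorphisms $\widetilde{\sigma}_i$ from Proposition \ref{extaautoSPBWn}, through the rule
\begin{equation*}
a \cdot dx_i \;=\; dx_i \cdot \widetilde{\sigma}_i(a), \qquad a \in A,\ 1 \le i \le n.
\end{equation*}
The exterior algebra $\Omega A = \bigoplus_{k=0}^{n} \Omega^k A$ is then built as the $A$-bimodule quotient of the tensor algebra of $\Omega^1 A$ over $A$ by the relations $dx_i \wedge dx_j = -dx_j \wedge dx_i$ (so $dx_i \wedge dx_i = 0$); the hypothesis $\sigma_i\circ\sigma_j = \sigma_j\circ\sigma_i$ lifts to $\widetilde{\sigma}_i\circ\widetilde{\sigma}_j = \widetilde{\sigma}_j\circ\widetilde{\sigma}_i$, which is precisely what makes these wedge relations compatible with both module structures. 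The differential is declared on generators by $d r = \sum_{i=1}^{n} \widetilde{\delta}_i(r)\, dx_i$ for $r \in R$ (which by $\widetilde{\delta}_i|_R = \delta_i$ reduces to $\sum_i \delta_i(r)dx_i$) and $dx_i = dx_i$, and extended to all of $A$ by the graded Leibniz rule.

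The verification then splits into three checkable pieces. First, that $d$ is well-defined and satisfies $d^2 = 0$: this is the calculation where the compatibility hypotheses of Proposition \ref{extaautoSPBWn}, namely $\delta_i\circ\delta_j = \delta_j\circ\delta_i$, $\delta_i\circ\sigma_j = \sigma_j\circ\delta_i$, and $\delta_k(d_{i,j}) = \delta_k(r_l^{(i,j)}) = 0$, combined with $d_{i,j}=1$ and $r_k^{(i,j)} = 0$ for $k \ge 1$, enter in an essential way, ensuring that the defining relations of $A$ are sent to zero and that mixed partials commute. Second, that the calculus is connected, which follows from $\ker(d|_R) = \Bbbk$ under the standing $S$-algebra assumptions. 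Third, and this is the step I expect to be the main obstacle, one must prove integrability.

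For integrability, I propose to take the candidate volume form $\omega = dx_1 \wedge dx_2 \wedge \cdots \wedge dx_n \in \Omega^n A$ and verify that it is a free generator of $\Omega^n A$ as both a left and right $A$-module. The induced algebra automorphism in (\ref{BrzezinskiSitarz2017(2.2)}) is then $\nu_\omega = \widetilde{\sigma}_1\circ\widetilde{\sigma}_2\circ\cdots\circ\widetilde{\sigma}_n$, well-defined and an automorphism of $A$ by the commutativity hypothesis. To conclude integrability, I would apply Proposition \ref{BrzezinskiSitarz2017Lemmas2.6and2.7}(2): for each $1 \le k \le n-1$, take the natural bases of $\Omega^k A$ given by the ordered wedges $dx_{i_1}\wedge\cdots\wedge dx_{i_k}$ and their complementary $(n-k)$-forms, and check the two reconstruction identities by a direct monomial computation. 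The bulk of the work lies in bookkeeping the signs and the $\widetilde{\sigma}_i$-twists in these identities, but they are the same signs and twists that make $\omega$ both a left and right generator; the commutativity of the $\widetilde{\sigma}_i$ makes this consistent. Granted these, Proposition \ref{BrzezinskiSitarz2017Lemmas2.6and2.7}(2) identifies $\omega$ as an integrating form, so $(\Omega A, d)$ is a connected integrable $n$-dimensional calculus, and $A$ is differentially smooth.
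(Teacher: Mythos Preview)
Your proposal follows essentially the same architecture as the paper's proof: obtain $\mathrm{GKdim}(A)=n$ via Lezama--Venegas, build $\Omega^1 A$ as the free right $A$-module on $dx_1,\dots,dx_n$ with left action $a\cdot dx_i=dx_i\,\widetilde{\sigma}_i(a)$, pass to the exterior algebra with anticommuting $dx_i$'s, take the volume form $\omega=dx_1\wedge\cdots\wedge dx_n$ with $\nu_\omega=\widetilde{\sigma}_1\circ\cdots\circ\widetilde{\sigma}_n$, and verify integrability through Proposition~\ref{BrzezinskiSitarz2017Lemmas2.6and2.7}(2) using the ordered $k$-wedges together with their complementary $(n-k)$-wedges. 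The one place you diverge is in declaring $dr=\sum_i\delta_i(r)\,dx_i$ on $R$: the paper does not use the $\widetilde{\delta}_i$ at all in the proof but rather defines $d$ from $x_i\mapsto dx_i$ alone, writes $d(f)=\sum_i dx_i\,\partial_{x_i}(f)$, and reads off the explicit formula $\partial_{x_i}(x_1^{\alpha_1}\cdots x_n^{\alpha_n})=\alpha_i\,x_1^{\alpha_1}\cdots x_i^{\alpha_i-1}\cdots x_n^{\alpha_n}$, from which connectedness is deduced directly. Your prescription for $d|_R$ would require a separate Leibniz verification (each $\delta_i$ is a $\sigma_i$-derivation rather than an ordinary one, so $d(rs)=(dr)s+r(ds)$ is not automatic with the twisted bimodule structure), so be cautious there; apart from this point the two arguments coincide.
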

\begin{proof}
From \cite[Theorem 2.1]{LezamaVenegas2020}, we get that ${\rm GKdim}(\sigma(R)\langle x_1,\ldots, x_n\rangle) = n$. Hence, we proceeed to construct an $n$-dimensional integrable calculus. 

Consider $\Omega^{1}(\sigma(R)\langle x_1,\ldots, x_n\rangle)$ a free right $\sigma(R)\langle x_1,\ldots, x_n\rangle$-module of rank $n$ with generators $dx_1, \ldots, dx_n$. Define a left $\sigma(R)\langle x_1,\ldots, x_n\rangle$-module structure by
    \begin{equation}\label{relrightmodn}
        f dx_i = dx_i\widetilde{\sigma}_{i}(f),  \quad {\rm for\ all}\ 1\leq i \leq n, \  f\in \sigma(R)\langle x_1,\ldots, x_n\rangle.
    \end{equation}

Notice that the relations in $\Omega^{1}(\sigma(R)\langle x_1,\ldots, x_n\rangle)$ are given by
    \begin{align}\label{reltdt}
        x_idx_j =  dx_jx_i, \quad {\rm for\ all}\ 1\leq i < j \leq n.     
    \end{align}

We want to extend the assignments $x_i\mapsto dx_i$ and $1\leq i \leq n$ to a map 
$$
d: \sigma(R)\langle x_1,\ldots, x_n\rangle \to \Omega^{1}(\sigma(R)\langle x_1,\ldots, x_n\rangle)
$$

satisfying the Leibniz's rule. As expected, this is possible if we guarantee its compatibility with the non-trivial relations {\rm (}\ref{relSPBW}{\rm )}, i.e. if
    \begin{align*}
        dx_jx_i+x_jdx_i = d_{i,j}dx_ix_j+d_{i,j}x_idx_j +\sum_{k=1}^{n}r_{k}^{(i,j)}dx_k, \quad {\rm for}\ i < j.
    \end{align*}

Define $S$-linear maps 
$$
\partial_{x_i}: \sigma(R)\langle x_1,\ldots, x_n\rangle \rightarrow \sigma(R)\langle x_1,\ldots, x_n\rangle
$$ 

such that
\begin{align*}
    d(f) = \sum_{i=1}^{n}dx_i \partial_{x_i}(f), \quad {\rm for\ all}\ f \in \sigma(R)\langle x_1,\ldots, x_n\rangle.
\end{align*}

These maps are well-defined since the elements $dx_i$ for $1\leq i \leq n$ are free generators of the right $\sigma(R)\langle x_1,\ldots, x_n\rangle$-module $\Omega^1(\sigma(R)\langle x_1,\ldots, x_n\rangle)$. In this way, $d(f) = 0$ if and only if $\partial_{x_i}(f) = 0$ for $1 \leq i \leq n$. Using relations {\rm (}\ref{relrightmodn}{\rm )} and definitions of the maps $\widetilde{\sigma}_{i}$ for $1 \leq i \leq n$, we find that
\begin{align}
    \partial_{x_i}(x_1^{\alpha_{1}}\cdots x_n^{\alpha_{n}}) = &\ l_{i}x_{1}^{\alpha_{1}}\cdots x_i^{l_i-1}x_{i+1}^{l_{i+1}}\cdots x_n^{\alpha_{n}}. \notag
\end{align}

Then $d(f) = 0$ if and only if $f$ is a scalar multiple of the identity. This shows that $(\Omega(\sigma(R)\langle x_1,\ldots, x_n\rangle), d)$ is connected, where 
\[
\Omega (\sigma(R)\langle x_1,\ldots, x_n\rangle) = \bigoplus_{i=0}^{n}\Omega^i\left(\sigma(R)\langle x_1,\ldots, x_n\rangle\right).
\]

The universal extension of $d$ to higher forms compatible with {\rm (}\ref{reltdt}{\rm )} gives the following rules for $\Omega^l(\sigma(R)\langle x_1,\ldots, x_n\rangle)$ for $2 \le l \leq n-1$, 
\begin{align}\label{relnwedge}
    \bigwedge_{k=1}^{l}dx_{q(k)} = &\ (-1)^{\sharp}\bigwedge_{k=1}^ldx_{p(k)}, 
\end{align}

where $q:\{1,\ldots,l\}\rightarrow \{1,\ldots,n\}$ is an injective map, $p:\{1,\ldots,l\}\rightarrow \text{Im}(q)$ is an increasing injective map and $\sharp$ is the number of $2$-permutation needed to transform $q$ into $p$.

By assumption the automorphisms $\widetilde{\sigma}_{i}$'s commute with each other, which implies that there are no additional relations to the previous ones. Then
\begin{align*}
 \Omega^{n-1}(\sigma(R)\langle x_1,\ldots, x_n\rangle) = &\ \left[dx_1\wedge dx_2\wedge \cdots \wedge dx_{n-1}\oplus dx_1\wedge dx_3\wedge \cdots \wedge dx_{n} \right. \\
    &\ \left. \oplus \cdots \oplus \ dx_2\wedge \cdots \wedge dx_{n}\right]\sigma(R)\langle x_1,\ldots, x_n\rangle. 
\end{align*}

Since 
\[
\Omega^{n} (\sigma(R)\langle x_1,\ldots, x_n\rangle) = \omega\sigma(R)\langle x_1,\ldots, x_n\rangle\cong \sigma(R)\langle x_1,\ldots, x_n\rangle
\] 

as a right and left $\sigma(R)\langle x_1,\ldots, x_n\rangle$-module, with $\omega=dx_1 \wedge \cdots \wedge dx_n$, where $\widetilde{\sigma}_{\omega}=\widetilde{\sigma}_{1}\circ\cdots\circ\widetilde{\sigma}_{n}$, it follows that $\omega$ is a volume form of the SPBW extension $\sigma(R)\langle x_1,\ldots, x_n\rangle$. From Proposition \ref{BrzezinskiSitarz2017Lemmas2.6and2.7} (2), we get that $\omega$ is an integral form by setting
\begin{align*}
\omega_i^j = &\ \bigwedge_{k=1}^{j}dx_{p_{i,j}(k)}, \quad \text{ for } 1\leq i \leq \binom{n}{j}, \quad {\rm and} \\
\bar{\omega}_i^{n-j} = &\ (-1)^{\sharp_i}\bigwedge_{k=j+1}^{n}dx_{\bar{p}_{i,j}(k)},  \quad \text{for}\ 1\leq i \leq \binom{n}{j},
\end{align*}

where $p_{i,j}:\{1,\ldots,j\}\rightarrow \{1,\ldots,n\}$ is an increasing injective map, $\bar{p}_{i,j}:\{j+1,\ldots,n\}\rightarrow (\text{Im}(p_{i,j}))^c$ is also an increasing injective map and $\sharp_{i,j}$ is the number of $2$-permutation needed to transform $\{\bar{p}_{i,j}(j+1),\ldots, \bar{p}_{i,j}(n),p_{i,j}(1), \ldots, p_{i,j}(j)\}$ into $\{1, \ldots, n\}$. 

Let $\omega' \in \Omega^j(\sigma(R)\langle x_1,\ldots, x_n\rangle)$. Then:
\[
\omega' = \displaystyle\sum_{i=1}^{\binom{n}{j}}\bigwedge_{k=1}^{j}dx_{p_{i,j}(k)}a_i, \quad {\rm with}\ a_i \in R.
\] 

This implies that we have the equalities given by
{\footnotesize{
    \begin{align*}
    \sum_{i=1}^{\binom{n}{j}}\omega_{i}^{j}\pi_{\omega}(\bar{\omega}_i^{n-j}\wedge \omega') &\ =\sum_{i=1}^{\binom{n}{j}}\bigwedge_{k=1}^{j}dx_{p_{i,j}(k)}\pi_{\omega}\left(a_i(-1)^{\sharp_{i,j}}\bigwedge_{k=j+1}^{n}dx_{\bar{p}_{i,j}(k)}\wedge\bigwedge_{k=1}^{j}dx_{p_{i,j}(k)}\right) \\
    &\ = \sum_{i=1}^{\binom{n}{j}}\bigwedge_{k=1}^{j}dx_{p_{i,j}(k)}a_i \\ 
    &\ = \omega', 
    \end{align*}
}}
and finally, by Proposition \ref{BrzezinskiSitarz2017Lemmas2.6and2.7} (2), we conclude that $\sigma(R)\langle x_1,\ldots, x_n\rangle$ is differentially smooth.
\end{proof}

\begin{example}
Consider the skew polynomial ring $R[x; \sigma, \delta]$ studied by Nasr-Isfahani and Moussavi \cite{NasrMoussavi2008}. As we saw in the Introduction, $\sigma$ is an automorphism of $R$, $\delta$ is a $\sigma$-derivation such that $\alpha \delta = \delta \alpha$, and the extended automorphism $\overline{\sigma}$ and the $\overline{\sigma}$-derivation $\overline{\delta}$ on $R[x; \sigma, \delta]$ are given by
\begin{align*}
    \overline{\sigma}(f(x)) = &\ \sigma(r_0) + \sigma(r_1) x + \dotsb + \sigma(r_n)x^n, \quad {\rm and} \\
     \overline{\delta}(f(x)) = &\ \delta(r_0) + \delta(r_1) x + \dotsb + \delta(r_n)x^n, 
\end{align*}

respectively. As it is clear, these assumptions satisfy those corresponding in Proposition \ref{extaautoSPBWn}, and hence Theorem \ref{smoothSPBWn} shows that $R[x; \sigma, \delta]$ is differentially smooth when $\text{GKdim}(R)=0$.

By using a similar reasoning, and under the natural assumptions, iterated Ore extensions $R[x_1; \sigma_1, \delta_1][x_2;\sigma_2, \delta_2] \dotsc R[x_1; \sigma_n, \delta_n]$ are also differentially smooth.
\end{example}

\begin{example}
Artamonov et al. \cite{ArtamonovLezamaFajardo2016} studied extended modules, Vaserstein's, Quillen's patching, Horrock's, and Quillen-Suslin's theorems for a special class of Ore extensions. They assumed that for a ring $R$, $A$ denotes the Ore extension $A := R[x_1, \dotsc, x_n; \sigma]$ for which $\sigma$ is an automorphism of $R$, $x_i x_j = x_j x_i$ and $x_i r = \sigma_i(r) x_i$, for every $1\le i, k \le n$. As it is clear, this kind of Ore extensions satisfies the assumptions in Theorem \ref{smoothSPBWn}.
\end{example}

\begin{remark}\label{Brzezinski2015ExamplesOE}
Related to Ore extensions, and under some mild and geometrically natural assumptions, Brzezi\'nski and Lomp proved that if $R$ and $S$ are algebras with integrable calculi $(\Omega R, d_R)$ and $(\Omega S, d_S)$, $\Omega R$ is a finitely generated projective right $R$-module and $\Omega$ is a finitely generated projective right $S$-module, then $(\Omega R \otimes \Omega S, d)$ is an integrable differential calculus for $R\otimes S$ \cite[Proposition 3.1]{BrzezinskiLomp2018}. With this, if $R$ and $S$ are differentially smooth algebras with respect to calculi which are finitely generated projective as right modules ${\rm GKdim}(R \otimes S) = {\rm GKdim}(R) + {\rm GKdim}(S)$, then the tensor product algebra $R\otimes S$ is differentially smooth \cite[Corollary 3.2]{BrzezinskiLomp2018}.

They also proved that if $R$ is an algebra with an integrable differential calculus $(\Omega R, d)$ such that $\Omega R$ is a finitely generated right $R$-module, for any automorphism $\sigma$ of $R$ that extends to a degree-preserving automorphism of $\Omega R$, which commutes with $d$, there exists an integrable differential calculus $(\Omega A, d)$ on the skew polynomial ring $R[x;\sigma]$ and the Laurent skew polynomial ring $R[x^{\pm 1}; \sigma]$. If $R$ is differentially smooth with respect to $(\Omega R, d)$ and ${\rm GKdim}(A) = {\rm GKdim}(R) + 1$, then $A$ is also differentially smooth \cite[Theorem 4.1]{BrzezinskiLomp2018}. The following examples illustrates this situation.

For any non-zero $q\in \Bbbk^{*}$, let $A_q$ be the algebra generated by the indeterminates $x, y, z$ and relations $xy = yx,\ xz = qzy, \ yz = zx$, and let $B_q$ be the algebra generated by $x, y$ and invertible $z$ subject to the same relations. Since $A_q = \Bbbk[x, y][z;\sigma]$ and $B_q = \Bbbk[x, y][z^{\pm 1};\sigma]$ with $\sigma(x) = y$ and $\sigma(y) = qx$, then $A_q$ and $B_q$ are differentially smooth \cite[Example 4.6]{BrzezinskiLomp2018}. Let us see the details.

Note that the polynomial algebra $\Bbbk[x, y]$ is differentially smooth with the usual commutative differential calculus $\Omega (\Bbbk[x, y])$, i.e.,
\begin{align*}
       x dx = &\ dx x, & x dy = &\ dy x, & y dx = &\ dx y, \\ 
       y dy = &\ dy y, & dx dy = &\ - dy dx, & (dx)^2 = &\ (dy)^2 = 0. 
\end{align*}

The automorphism $\sigma$ extends to an automorphism of $\Omega (\Bbbk[x, y])$ by requesting it commute with $d$, that is, $\sigma(dx) = dy$ and $\sigma(dy) = q dx$.

Note that $\Omega (\Bbbk[x, y])$ is finitely generated as a right $\Bbbk[x, y]$-module and
\[
{\rm GKdim}(A_q) = {\rm GKdim}(B_q) = 3 = {\rm GKdim}(\Bbbk[x, y]) + 1,
\]

whence $A_q$ and $B_q$ are differentially smooth.

By using a similar reasoning \cite[Corollary 4.9]{BrzezinskiLomp2018}, it can be seen that the coordinate ring of the so called {\em quantum affine} $n$-{\em space}, that is, the algebra generated by the indeterminates $x_1, \dotsc, x_n$ subject to the relations $x_j x_i = q_{ij} x_i x_j$ for all $i < j$ with $q_{ij} \in \Bbbk^{*}$, is differentially smooth (c.f. \cite[Corollary 6 and Theorem 9]{KaracuhaLomp2014}).
\end{remark}

The results appearing in Remark \ref{Brzezinski2015ExamplesOE} motivate us to formulate the following question:

\vspace{0.3cm}

\noindent {\bf Question.} Let $A$ be a skew PBW extension over a differential smooth algebra $R$. Under which conditions does the differential smooth property pass from $R$ to $A$?

\end{document}